\renewcommand{\Im}{{\rm Im}}
\renewcommand{\dim}{{\rm dim}}
\newcommand{\coker}{{\rm coker}}
\newcommand{\Hom}{{\rm Hom}}
\theoremstyle{plain}
\newtheorem{thm}{Theorem}[section]
\newtheorem{cor}[thm]{Corollary}
\newtheorem{prop}[thm]{Proposition}
\newtheorem{lem}[thm]{Lemma}
\theoremstyle{definition}
\newtheorem{defn}[thm]{Definition}
\newtheorem{ex}[thm]{Example}
\newtheorem{rmk}[thm]{Remark}
\def\GG{{\textbf G}}
\def\cA{\mathcal{A}}
\def\cM{\mathcal{M}}
\def\cR{\mathcal{R}}
\def\cU{\mathcal{U}}
\newcommand{\bb}[1]{\mathbb{#1}}
\newcommand{\mc}[1]{\mathcal{#1}}
\newcommand{\mf}[1]{\mathfrak{#1}}
\newcommand{\ol}[1]{\overline{#1}}
\newcommand{\ul}[1]{\underline{#1}}
\newcommand{\defi}[1]{{\em #1}}
\newcommand{\op}[1]{\operatorname{#1}}
\renewcommand{\a}{\alpha}
\renewcommand{\b}{\beta}
\newcommand{\bw}{\bigwedge}
\renewcommand{\coker}{\operatorname{Coker}}
\newcommand{\im}{\operatorname{Im}}
\newcommand{\nc}{\operatorname{nc}}
\renewcommand{\ker}{\operatorname{Ker}}
\newcommand{\onto}{\twoheadrightarrow}
\newcommand{\oo}{\otimes}
\newcommand{\vnc}{\operatorname{vnc}}
\def\lra{\longrightarrow}
\newcommand{\Gr}{\operatorname{Gr}}
\newcommand{\IA}{\operatorname{IA}}
\newcommand{\OA}{\operatorname{OA}}
\newcommand{\Sym}{\operatorname{Sym}}
\theoremstyle{plain}
\newtheorem{lemma}[thm]{Lemma}
\newtheorem{proposition}[thm]{Proposition}
\theoremstyle{definition}
\newtheorem*{ack}{Acknowledgment}
\newtheorem*{thm-main*}{Main Theorem}
\begin{document}
%\date{\today}                                           % Activate to display a given date or no date
\title[Topological invariants of groups and Koszul modules]{Topological invariants of groups and Koszul modules}

%\author{Marian Aprodu, Gavril Farkas, \c Stefan Papadima, Claudiu Raicu, Jerzy Weyman}

\author[M. Aprodu]{Marian Aprodu}
\address{Marian Aprodu: Simion Stoilow Institute of Mathematics
\hfill \newline\texttt{}
 \indent P.O. Box 1-764,
RO-014700 Bucharest, Romania, and \hfill  \newline\texttt{}
\indent  Faculty of Mathematics and Computer Science, University of Bucharest,  Romania}
\email{{\tt marian.aprodu@imar.ro}}

\author[G. Farkas]{Gavril Farkas}
\address{Gavril Farkas: Institut f\"ur Mathematik,   Humboldt-Universit\"at zu Berlin \hfill \newline\texttt{}
\indent Unter den Linden 6,
10099 Berlin, Germany}
\email{{\tt farkas@math.hu-berlin.de}}
%\thanks{$^2$ Partially supported by the DFG Grant \emph{Syzygien und Moduli}}

\author[\c{S}. Papadima]{\c{S}tefan Papadima\textsuperscript{\textdagger} }
\address{\c{S}tefan Papadima: Simion Stoilow Institute of Mathematics \hfill \newline\texttt{}
\indent P.O. Box 1-764,
RO-014700 Bucharest, Romania}
% \email{{\tt Stefan.Papadima@imar.ro}}
% \thanks{$^{1,3}$Partially supported by the Romanian Ministry of Research and
%Innovation, CNCS - UEFISCDI, grant
%PN-III-P4-ID-PCE-2016-0030, within PNCDI III}
\thanks{\c{S}tefan Papadima passed away on  January 10, 2018. }
\author[C. Raicu]{Claudiu Raicu}
\address{Claudiu Raicu: Department of Mathematics,
University of Notre Dame \hfill \newline\texttt{}
\indent 255 Hurley Notre Dame, IN 46556, USA, and \hfill\newline\texttt{}
\indent Simion Stoilow Institute of Mathematics, \hfill\newline\texttt{}
\indent  P.O. Box 1-764, RO-014700 Bucharest, Romania}
\email{{\tt craicu@nd.edu}}
%\thanks{$^4$Supported by the Alfred P. Sloan Foundation and the NSF Grant No.~1600765}

\author[J. Weyman]{Jerzy Weyman}
\address{Jerzy Weyman: Institute of Mathematics,
Uniwersytet Jagiello\'nski \hfill \newline\texttt{}
\indent 30-348,
Krak\'ow, Poland}
\email{{\tt  jerzy.weyman@gmail.com}}
%\thanks{$^5$Partially supported by the Sidney Professorial Fund and the NSF grant No.~1400740}

\begin{abstract}
We provide a uniform vanishing result for the graded components of the finite length Koszul module associated to a subspace $K\subseteq \bigwedge^2 V$, as well as a sharp upper bound for its Hilbert function. This purely algebraic statement has interesting applications to the study of a number of invariants associated to finitely generated groups, such as the Alexander invariants, the Chen ranks, or the degree of growth and virtual nilpotency class. For instance, we explicitly bound the aforementioned invariants in terms of the first Betti number for the maximal metabelian quotients of (1) the Torelli group associated to the moduli space of curves; (2) nilpotent fundamental groups of compact K\"ahler manifolds; (3) the Torelli group of a free group.\end{abstract}

\maketitle

%\tableofcontents

%Table of contents:
%
%\begin{itemize}
%\item[1.] Introduction
%\item[2.] Koszul modules
%\begin{itemize}
%    \item[2.1] Definitions and basic properties
%    \item[2.2] The BGG correspondence and Koszul modules
%    \item[2.3] Koszul modules and determinant maps
%\end{itemize}
%\item[3.] Weyman modules and syzygies of the tangent developable to a rational normal curve
%\begin{itemize}
%    \item[3.1] Representation theory
%    \item[3.2] The resolution of $\overline{R}$
%    \item[3.3] Two resolutions of $C$
%    \item[3.4] A mapping cone resolution of $R$
%\end{itemize}
%\item[4.] Vanishing
%\begin{itemize}
%    \item[4.1] A general divisor support comparison
%    \item[4.2] Vanishing for Koszul homology
%    \item[4.3] Vanishing for Weyman modules
%    \item[4.4] Vanishing for Koszul modules (the case $m=2n-3$)
%    \item[4.5] Vanishing for Koszul modules (the general case)
%\end{itemize}
%\item[5.] Applications
%    \begin{itemize}
%    \item[5.1] The Hilbert series of Weyman modules
%    \item[5.2] A plethysm formula
%    \item[5.3] Alexander invariants
%    \end{itemize}
%
%\end{itemize}

%\newpage

\section{Introduction}

A well-established way of studying the properties of a finitely generated group $G$ is by considering its associated graded Lie algebra
$$\mbox{gr}(G):=\bigoplus_{q\geq 1} \Gamma_q(G)/\Gamma_{q+1}(G),$$
defined in terms of its \emph{lower central series}
\[G=\Gamma_1(G)\geq \ldots \geq \Gamma_q(G)\geq \Gamma_{q+1}(G) \geq \ldots\mbox{ where }\Gamma_{q+1}(G)=[\Gamma_q(G),G].\]
The quotients $\mbox{gr}_q(G)=\Gamma_q(G)/\Gamma_{q+1}(G)$ are finitely generated abelian groups and their ranks $\phi_q(G):=\mbox{rk } \mbox{gr}_q(G)$ encode fundamental numerical information about $G$. For example, $\Gamma_2(G)=G'$ is the commutator subgroup of $G$ and $\mbox{gr}_1(G)=G_{\mathrm{ab}}$ is the abelianization of $G$, whose rank $b_1(G)$ is the first Betti number of $G$. 

\vskip 3pt

For general $q\geq 2$, it is extremely difficult to compute $\phi_q(G)$. Following Chen \cite{Chen}, Massey \cite{Massey}, Papadima-Suciu \cite{PS-imrn} and many others, a  more manageable task is to pass instead to
the \emph{metabelian quotient} $G/G''$, where $G''=[G',G']$ is the second commutator subgroup of $G$, and to consider its graded Lie algebra $\mbox{gr}(G/G'')$
and the corresponding \emph{Chen ranks}
$$\theta_q(G):=\phi_q(G/G'')=\mbox{rk } \mbox{gr}_q(G/G'').$$
The first Chen rank always satisfies $\theta_1(G)=\phi_1(G)=b_1(G)$, hence our focus will be on the case $q\geq 2$, when we only have the inequality $\phi_q(G)\geq \theta_q(G)$. For basic properties of these invariants initially considered in knot theory we refer to \cite{PS-imrn}, whereas for applications of Chen ranks to hyperplane arrangements one can consult \cite{CS}, \cite{CSch} and the references therein. 

It turns out that that in many cases $\mbox{gr}(G/G'')$ is an object belonging essentially to commutative algebra, which can be studied to great effect with homological and algebro-geometric methods. The precise context where these methods are most fruitful is when $G$ is a \emph{$1$-formal group} in the sense of Sullivan \cite{Su77}. We recall that for instance fundamental groups of compact K\"ahler manifolds, hyperplane arrangement groups, or the Torelli group $T_g$ of the mapping class group are all known to be $1$-formal. We refer to \cite{PSform} for a survey of various facets of formality. 
We begin with a sample application of the results of our work (see Theorem~\ref{thm:bound-chen}).

\begin{thm}\label{thm:Kahler}
 Let $G$ be a finitely generated $1$-formal group and suppose its first Betti number is $n=b_1(G)\geq 3$. If $G/G''$ is nilpotent, then $\theta_q(G)=0$ for $q\geq n-1$ and
 \[\theta_q(G) \leq {n+q-3\choose n-1} \cdot \frac{(n-2)(n-1-q)}{q}, \quad\mbox{ for }q=2,\ldots,n-2.\]
\end{thm}

%Note that the first Betti number of a  K\"ahler group $G$ is necessarily even. 
What is striking about Theorem~\ref{thm:Kahler} is that the bounds that we obtain depend only on $b_1(G)$. Removing the $1$-formality assumption, it is easy to construct examples where the interval where $\theta_q(G)\neq 0$ is arbitrarily large relative to $b_1(G)$ (see Example~\ref{ex=freenilp}). We remark here that the assumption $b_1(G)\geq 3$ is made in order to make the conclusion of Theorem~\ref{thm:Kahler} uniform: if $b_1(G)\leq 2$ then $\theta_q(G)=0$ for all $q\geq 2$ (but not for $q=1$). As in Theorem~\ref{thm:Kahler}, all the subsequent estimates and vanishing results for $\theta_q(G)$ will have a borderline case $q=b_1(G)-1$. Since our theory is set up to control $\theta_q(G)$ for $q\geq 2$ (essentially due to \eqref{eq:theta-less-W} below), it will simplify the discussion to assume that $b_1(G)\geq 3$. Building on Theorem~\ref{thm:Kahler}, we prove the following in Section~\ref{subsec:deg-growth}.

\begin{thm}\label{thm:growth-degree}
 Let $G$ be a finitely generated $1$-formal group, and let $M = G/G''$ denote its maximal metabelian quotient. If $\Gamma_q(M)$ is finite for $q\gg 0$ (in particular if $M$ is nilpotent) and if $n=b_1(G)\geq 3$, then
 \begin{enumerate}
  \item\label{it:conc-1} $\Gamma_{n-1}(M)$ is finite.
  \item\label{it:conc-2} $M$ contains a nilpotent finite index subgroup with nilpotency class at most $n-2$.
  \item\label{it:conc-3} The group $M$ has polynomial growth of degree $d(M)$ bounded above by
 \[d(M) \leq n + (n-2)\cdot{2n-3\choose n-4}.\]
 \end{enumerate}
\end{thm}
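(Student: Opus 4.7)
The plan is to reduce each assertion to estimates on the Chen ranks $\theta_q(G)=\mathrm{rk}\,\mathrm{gr}_q(M)$ and to invoke the paper's main Koszul-module results. In the $1$-formal setting, a theorem of Papadima--Suciu identifies, for each $q\geq 2$, the Chen rank $\theta_q(G)$ with the dimension of a graded component of the finite length Koszul module $W(V,K)$ attached to $V=H_1(G,\mathbb{Q})$ and $K=\ker\bigl(\bigwedge^2 V\to H_2(G,\mathbb{Q})\bigr)$. Feeding the vanishing and Hilbert function bounds for $W(V,K)$ proved earlier in the paper through this dictionary yields, under $n=b_1(G)\geq 3$,
\[
\theta_q(G)=0 \text{ for } q\geq n-1,\qquad \theta_q(G)\leq \binom{n+q-3}{n-1}\frac{(n-2)(n-1-q)}{q} \text{ for } 2\leq q\leq n-2.
\]
These are exactly the bounds that power Theorem~\ref{thm:Kahler} in the K\"ahler case, and they supply all the algebraic input needed here.

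For part~\eqref{it:conc-1}, the vanishing $\theta_q(G)=0$ for $q\geq n-1$ says that $\mathrm{gr}_q(M)=\Gamma_q(M)/\Gamma_{q+1}(M)$ is finite in that range; combined with the hypothesis that $\Gamma_q(M)$ is finite for $q\gg 0$, a descending induction in the short exact sequence $1\to\Gamma_{q+1}(M)\to\Gamma_q(M)\to\mathrm{gr}_q(M)\to 1$ gives finiteness of $\Gamma_{n-1}(M)$. For part~\eqref{it:conc-2}, finiteness of $\Gamma_{n-1}(M)$ makes $M$ finitely generated and virtually nilpotent; by a standard Mal'cev-type theorem, $M$ then contains a torsion-free nilpotent subgroup $N$ of finite index whose nilpotency class equals the rational nilpotency class of $M$, i.e.\ the largest $q$ with $\theta_q(G)>0$, and this is at most $n-2$ by~\eqref{it:conc-1}.

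Finally, part~\eqref{it:conc-3} follows from the Bass--Guivarc'h formula for finitely generated virtually nilpotent groups, $d(M)=\sum_{q\geq 1}q\cdot\theta_q(G)$. Substituting $\theta_1(G)=n$ together with the Hilbert function bound gives
\[
d(M)\leq n+(n-2)\sum_{q=2}^{n-2}(n-1-q)\binom{n+q-3}{n-1},
\]
and the remaining sum collapses to $\binom{2n-3}{n-4}$ by Pascal's rule and the hockey-stick identity $\sum_k\binom{a+k}{a}=\binom{a+m+1}{a+1}$. The main obstacle I anticipate is the Koszul-module translation in the first paragraph: it underpins everything but requires the careful formality package of Papadima--Suciu; once it is in place, both the descending induction for~\eqref{it:conc-1}--\eqref{it:conc-2} and the binomial evaluation for~\eqref{it:conc-3} are comparatively routine.
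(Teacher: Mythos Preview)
Your overall strategy matches the paper's, and parts~(1) and~(2) are handled essentially as the paper does. There is, however, a genuine gap in your treatment of part~(3).

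You assert the Bass--Guivarc'h formula in the form $d(M)=\sum_{q\geq 1} q\cdot\theta_q(G)=\sum_{q\geq 1} q\cdot\phi_q(M)$ for the virtually nilpotent group $M$. This is false in general: the paper explicitly warns (right after recalling the formula) that Bass--Guivarc'h fails for merely virtually nilpotent groups, with the infinite dihedral group as a counterexample, where $d=1$ but every $\phi_q$ vanishes. The formula is only valid for a \emph{nilpotent} finite-index subgroup $H\leq M$, giving $d(M)=d(H)=\sum_q q\,\phi_q(H)$, and the ranks $\phi_q(H)$ need not equal $\phi_q(M)$. The paper bridges this with a separate lemma comparing partial sums: one always has $\sum_{q=1}^k \phi_q(H)\geq \sum_{q=1}^k \phi_q(M)$, with equality when $\Gamma_{k+1}(M)$ is finite. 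Since part~(1) gives finiteness of $\Gamma_{n-1}(M)$, equality holds at $k=n-2$; an Abel summation then converts $\sum_{q=1}^{n-2} q\,\phi_q(H)$ into $(n-2)$ times the full partial sum (where equality holds) minus the shorter partial sums (where the inequality goes the right way), yielding $d(M)\leq \sum_{q=1}^{n-2} q\,\phi_q(M)$. Only after this step can you plug in the Chen-rank bounds and evaluate the binomial sum as you do.

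A minor point: your description of $K$ is inverted. In the paper's conventions $K^{\perp}\subseteq\bigwedge^2 V^{\vee}$ is the kernel of the cup product $\cup_G:\bigwedge^2 H^1\to H^2$, so $K\subseteq\bigwedge^2 V$ is the dual of the \emph{image} of $\cup_G$ (equivalently, the image of the comultiplication $H_2\to\bigwedge^2 H_1$), not a kernel.
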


We recall that the \emph{nilpotency class} of a nilpotent group $H$ is the largest value of $c$ for which $\Gamma_c(H)\neq \{1\}$, and that a group is \emph{virtually nilpotent} if it contains a nilpotent finite index subgroup. We can then define the \defi{virtual nilpotency class (vnc)} of a group to be the minimal nilpotency class of a finite index subgroup, so that the second conclusion of Theorem~\ref{thm:growth-degree} states that the metabelian quotient $G/G''$ has $\vnc(G/G'')\leq n-2$. A celebrated theorem of Gromov \cite{gromov} asserts that virtually nilpotent groups are precisely the ones that exhibit polynomial growth, and our theorem gives a precise bound for the degree of growth of $M=G/G''$ solely in terms of the first Betti number of $G$. As we show in Section~\ref{subsec:deg-growth}, this bound is a consequence of the Bass--Guivarc'h formula and the estimate on $\theta_q(G)$ as in Theorem~\ref{thm:Kahler}. As explained in Example~\ref{ex=freenilp} and Remark~\ref{rem:freenilp}, in the absence of $1$-formality (or some appropriate replacement) there can be no such bound for either $\vnc(M)$ or for $d(M)$.

\vskip 3pt

An important special case of Theorem~\ref{thm:growth-degree} concerns the Torelli group $T_g$ which measures from a homotopical point of view the difference between the moduli space of curves of genus $g$ and that of principally polarized abelian varieties of dimension $g$. If $\mathrm{Mod}_g$ denotes the mapping class group, recall that the Torelli group is defined via the exact sequence
$$1\longrightarrow T_g\longrightarrow \mathrm{Mod}_g\longrightarrow \mbox{Sp}_{2g}(\mathbb Z)\longrightarrow 1.$$
Using Johnson's fundamental calculation $H^1(T_g,\mathbb Q)\cong \bigwedge ^3 H_{\mathbb Q}/H_{\mathbb Q}$, where $H$ is the first integral homology of a genus $g$ Riemann surface (and hence $\mbox{rk} (H)=2g$), together with the recent result by Ershov and He \cite[Corollary~1.5]{EH} asserting that $T_g/T_g''$ is nilpotent for $g\geq 12$, and the $1$-formality of $T_g$ proved in \cite{Hai}, our Theorem~\ref{thm:growth-degree} yields the following consequence (see Section~\ref{subsec:torelli}).

\begin{thm}\label{thmB}
If $g\geq 12$,  the metabelian quotient $T_g/T_g''$ has virtual nilpotency class at most $${2g\choose 3}-2g-2.$$
\end{thm}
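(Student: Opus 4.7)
The plan is to apply Theorem~\ref{thm:growth-degree} directly to $G = T_g$, since the three hypotheses needed (1-formality, nilpotency of the metabelian quotient, and $b_1(G) \ge 3$) are exactly the external inputs already flagged in the paragraph preceding the statement.

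First I would set $G := T_g$ and assemble the hypotheses. By Hain's theorem \cite{Hai}, the Torelli group $T_g$ is $1$-formal. By Ershov--He \cite{EH}, the metabelian quotient $M := T_g/T_g''$ is nilpotent as soon as $g \ge 12$, so in particular $\Gamma_q(M)$ is finite (indeed trivial) for $q \gg 0$. Johnson's calculation gives
\[
H^1(T_g,\mathbb{Q}) \;\cong\; \bigwedge^3 H_{\mathbb{Q}} \big/ H_{\mathbb{Q}},
\]
where $H$ has rank $2g$, so
\[
n \;=\; b_1(T_g) \;=\; \binom{2g}{3} - 2g.
\]
A quick check shows that for $g \ge 12$ this quantity is comfortably $\ge 3$ (already $g=3$ would suffice), so the numerical hypothesis of Theorem~\ref{thm:growth-degree} is met.

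Next I would invoke conclusion~\eqref{it:conc-2} of Theorem~\ref{thm:growth-degree}, which yields that $M = T_g/T_g''$ contains a nilpotent finite-index subgroup of nilpotency class at most $n-2$. By definition this means
\[
\vnc(T_g/T_g'') \;\le\; n-2 \;=\; \binom{2g}{3} - 2g - 2,
\]
which is exactly the claim. No further computation is needed: since $T_g/T_g''$ is itself nilpotent for $g \ge 12$, the virtual nilpotency class coincides with the nilpotency class, but we only need the upper bound from Theorem~\ref{thm:growth-degree}.

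There is essentially no obstacle in this deduction; the entire content is packaged into Theorem~\ref{thm:growth-degree}, and the only things to verify are the three bullet hypotheses and the Betti number computation. The single point requiring any care is confirming that Johnson's identification of $H^1(T_g,\mathbb{Q})$ gives precisely $b_1 = \binom{2g}{3} - 2g$ and that this integer is at least $3$ throughout the range $g \ge 12$ where the Ershov--He nilpotency result applies.
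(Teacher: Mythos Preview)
Your proposal is correct and follows precisely the paper's own route: apply Theorem~\ref{thm:growth-degree} to $G=T_g$, using Hain's $1$-formality, the Ershov--He nilpotency of $T_g/T_g''$ for $g\geq 12$, and Johnson's computation $b_1(T_g)=\binom{2g}{3}-2g$, to read off $\vnc(T_g/T_g'')\leq n-2$. One small caveat: your aside that for a nilpotent group the virtual nilpotency class \emph{coincides} with the nilpotency class is not true in general (torsion can lower $\vnc$), but as you correctly note this is irrelevant to the argument.
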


\noindent We note that some of the results of \cite{EH} have been extended to $g\geq 4$ in \cite{CEP}, but the nilpotence of $T_g/T_g''$ is still open, preventing us from extending the range for $g$ in Theorem~\ref{thmB}.

We have similar applications  to the Torelli group $\OA_g$ of the free group $F_g$ on $g$ generators, discussed in Section~\ref{subsec:torelli}. We show in Theorem~\ref{thm=oan} that
\[\vnc(\OA_g/\OA_g'') \leq \frac{g(g+1)(g-2)}{2}-2\quad\mbox{ for all }g\geq 4,\]
which is an analogue of Theorem~\ref{thmB}. We note that in contrast with the Torelli groups~$T_g$, the groups $\OA_g$ are not known to be $1$-formal \cite[Question~10.6]{PS-johnson}.

\vskip 3pt

The proofs of Theorems~\ref{thm:Kahler} and~\ref{thm:growth-degree} are fundamentally algebraic. In order to make the transition to commutative algebra and algebraic geometry, for every finitely generated group $G$ we consider its \emph{Alexander invariant}
$$B(G):=H_1(G',\mathbb Z)=G'/G'',$$
viewed as a module over the group ring $\mathbb Z[G/G']$, with the action being given by conjugation. Using the \emph{augmentation ideal}
$I\subseteq \mathbb Z[G/G']$, one then constructs the graded module $\mbox{gr}\,B(G)$ over the ring $\mbox{gr } \mathbb Z[G/G']=\bigoplus_{q\geq 0} I^q/I^{q+1}$, by setting
$$\mbox{gr}_q B(G):=I^q\cdot B(G)/I^{q+1}\cdot B(G).$$
Massey proves in \cite{Massey} that $\mbox{gr}_{q+2}(G/G'')\cong I^q\cdot B(G)/I^{q+1}\cdot B(G)$, which in particular yields
$$\theta_{q+2}(G)=\mbox{rk } \mbox{gr}_q B(G).$$
Following \cite{PS-imrn} and especially \cite{PS15}, it turns out that the graded module $\mbox{gr}\,B(G)$ can be studied via Koszul cohomology using a purely algebraic construction which we describe next.

\vskip 3pt

\noindent {\bf Koszul modules.} Suppose that $V$ is an $n$-dimensional complex vector space and fix an $m$-dimensional subspace $K\subseteq \bigwedge^2 V$. We denote by $K^\perp=(\bigwedge^2V/K)^\vee\subseteq \bigwedge^2V^\vee$ the orthogonal complement of $K$.
We also denote by $S:=\mbox{Sym}(V)$ the polynomial algebra over $V$ and consider the Koszul complex
$$\cdots \longrightarrow \bigwedge^3 V\otimes S\stackrel{\delta_3}\longrightarrow \bigwedge^2 V\otimes S\stackrel{\delta_2}\longrightarrow V\otimes S\stackrel{\delta_1}\longrightarrow S\longrightarrow \mathbb C\longrightarrow 0.$$ According to \cite{PS15}, the  {\em Koszul module} associated to $(V,K)$ is the graded $S$-module
%\begin{equation}\label{eq:def-Koszul-module}
\[W(V,K):=\mathrm{Coker}\Bigl\{\bigwedge^3V\otimes S\longrightarrow \Bigl(\bigwedge^2V/K\Bigr)\otimes S\Bigr\},\]
%\end{equation}
where the map in question is the projection $\bw^2 V \oo S \to (\bigwedge^2V/K)\otimes S$ composed with the Koszul differential $\delta_3$. Our grading conventions are so that $W(V,K)$ is generated in degree $0$. Hence,
passing to individual graded pieces one has the identification $$W_q(V,K)\cong \bigwedge^2 V\otimes \mbox{Sym}^q(V)/\Bigl(\mbox{Ker}(\delta_{2,q})+K\otimes \mbox{Sym}^q(V)\Bigr).$$

\vskip 3pt

If $G$ is a finitely generated group, we define its \emph{Koszul module} $$W(G):=W\bigl(H_1(G,\mathbb C), K\bigr),$$ by letting $K = \partial_G\bigl(H_2(G,\mathbb C)\bigr)$, where $\partial_G$ is the dual of $\cup_G\colon \bigwedge^2 H^1(G,\mathbb C)\rightarrow H^2(G,\mathbb C)$, the cup product map on $G$. Equivalently, we have that $K^{\perp} = \mbox{Ker}\bigl(\cup_G\bigr)$. The origin of this purely algebraic definition of Koszul modules is \cite{PS-imrn}, where they are indirectly introduced in order to provide an explicit realization of the \emph{infinitesimal Alexander invariant} of $G$ as a graded module over the symmetric algebra $\mbox{Sym } H_1(G, \mathbb C)$.

It is shown in  \cite{PS-imrn} and \cite{MaP} that the following inequality holds
\begin{equation}\label{eq:theta-less-W}
\theta_{q+2}(G)=\mbox{rk } \mbox{gr}_q B(G)\leq \mbox{dim } W_q(G),
\end{equation}
with equality if the group $G$ is \emph{1-formal}. We refer to Proposition \ref{comparison} for a precise statement and further details in this direction. We prove the following result about~$W(G)$.

\begin{thm}\label{thmA}
Let $G$ be a finitely generated group and assume that the cup product map $$\cup_G\colon \bigwedge^2 H^1(G,\mathbb C)\rightarrow H^2(G,\mathbb C)$$ does not vanish on any non-zero decomposable elements. If $n=b_1(G)\geq 3$, then
\begin{equation}\label{eq:vanishing-WG}
W_q(G)=0 \ \ \mbox{ for all } \ \ q\geq n-3.
\end{equation}
If moreover $M=G/G''$ satisfies the condition that $\Gamma_q(M)$ be finite for $q\gg 0$, then its virtual nilpotency class and degree of polynomial growth satisfy the inequalities in Theorem~\ref{thm:growth-degree}.
\end{thm}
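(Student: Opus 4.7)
The plan is to reduce Theorem~\ref{thmA} to the paper's abstract algebraic vanishing theorem for Koszul modules, and then to carry out the same group-theoretic argument as in Theorem~\ref{thm:growth-degree}, with inequalities replacing equalities.

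First I would set $V := H_1(G,\mathbb{C})$, so that $n = \dim V$, and take $K\subseteq \bigwedge^2 V$ to be dual to $\operatorname{Im}(\cup_G)$, so that $K^{\perp} = \ker(\cup_G)\subseteq \bigwedge^2 V^{\vee}$. The hypothesis that $\cup_G$ does not vanish on any non-zero decomposable class translates precisely into the condition that the projectivization $\mathbb{P}(K^{\perp})$ is disjoint from the Grassmannian $\operatorname{Gr}(2,V^{\vee})\subseteq \mathbb{P}(\bigwedge^2 V^{\vee})$ of decomposable two-forms. This is the genericity hypothesis of the paper's main algebraic theorem, which immediately yields $W_q(G)=W_q(V,K)=0$ for all $q\geq n-3$, together with an explicit sharp upper bound on $\dim W_q(V,K)$ for $0\leq q\leq n-4$.

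For the second part I would combine these Koszul-module bounds with the universal inequality (\ref{eq:theta-less-W}), namely $\theta_{q+2}(G)\leq \dim W_q(G)$, proved in \cite{DHP} and \cite{PS-imrn}. Setting $q' = q+2$, this gives $\theta_{q'}(G)=0$ for $q'\geq n-1$ together with the same numerical bound on $\theta_{q'}(G)$ for $2\leq q'\leq n-2$ that appears in Theorem~\ref{thm:Kahler}. Since $\theta_{q'}(G)=\operatorname{rk}\operatorname{gr}_{q'}(M)$, each $\operatorname{gr}_{q'}(M)$ is a torsion abelian group for $q'\geq n-1$; combined with the hypothesis that $\Gamma_q(M)$ is finite for $q\gg 0$, a downward induction using that finitely generated torsion abelian groups are finite forces $\Gamma_{n-1}(M)$ to be finite. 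A standard argument then produces a torsion-free nilpotent finite-index subgroup of $M$ of nilpotency class at most $n-2$, giving $\vnc(M)\leq n-2$. The Bass--Guivarc'h formula $d(M)=\sum_{q\geq 1}q\cdot \theta_q(G)$ combined with the explicit $\theta_q$-bounds then delivers $d(M)\leq n+(n-2)\binom{2n-3}{n-4}$ after a routine binomial identity.

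The main obstacle is the abstract algebraic theorem on generic Koszul modules, which is the paper's central contribution; once it is available, Theorem~\ref{thmA} amounts to a dictionary between Koszul modules and the Alexander invariant of $G$, together with the Bass--Guivarc'h formula. The only subtle step in the translation is the passage from the vanishing of $\operatorname{rk}\operatorname{gr}_q(M)$ in high degree to the actual finiteness of $\Gamma_{n-1}(M)$, which is exactly where the extra hypothesis that $\Gamma_q(M)$ be finite for $q\gg 0$ is needed.
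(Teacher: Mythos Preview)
Your proposal is correct and follows essentially the same route as the paper: the hypothesis on $\cup_G$ is precisely $\mc{R}(G)=\{0\}$, so the Main Theorem gives the vanishing~(\ref{eq:vanishing-WG}), and the second part is the argument of Theorem~\ref{thm:growth-degree} with the inequality $\theta_{q+2}(G)\le\dim W_q(G)$ replacing equality (this is exactly the content of Remark~\ref{rem:vanishing-resonance}).

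One point to tighten: you invoke ``the Bass--Guivarc'h formula $d(M)=\sum_{q\ge1}q\cdot\theta_q(G)$'', but $M$ is only \emph{virtually} nilpotent, so the formula does not apply to $M$ directly. The paper deals with this by passing to a nilpotent finite-index subgroup $H$, applying Bass--Guivarc'h to $H$, and then comparing the partial sums $\sum_{q\le k}\phi_q(H)$ with $\sum_{q\le k}\phi_q(M)$ via Lemma~\ref{lem:ineq-sumphi} and an Abel-summation rearrangement. Alternatively, once you know $\Gamma_{n-1}(M)$ is finite you can quotient by it: the resulting group is genuinely nilpotent, has the same growth degree, and has the same $\phi_q$ as $M$, which justifies your formula as an actual equality.
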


%Note that because of the canonical identification $\Gamma_2(G)/\Gamma_3(G)\cong \bigl(\mbox{Ker}(\cup_G)\bigr)^{\vee}$, one has $\mbox{Ker}(\cup_G)\cong \mbox{Ker}(\cup_{G/G''})$, that is, the cup-product map at the level of first cohomology does not distinguish between $G$ and $G/G''$. Accordingly  any such universal bound on the nilpotence class of $G$ has to be formulated in terms of $G/G''$.

\vskip 3pt

The vanishing (\ref{eq:vanishing-WG}) follows from Theorem~\ref{thm:WG-RG}, while the inequalities for $d(M)$ and $\vnc(M)$ are explained in the proof of Theorem~\ref{thm:growth-degree} (see Remark~\ref{rem:vanishing-resonance}). An important application of Theorem \ref{thmA} is when $X$ is a compact K\"ahler manifold and $G=\pi_1(X)$. In this case $H^1(G,\mathbb C)=H^1(X,\mathbb C)$, and the kernel of $\cup_G$ equals the kernel of the map
$$\cup_X\colon \bigwedge^2 H^1(X,\mathbb C)\rightarrow H^2(X,\mathbb C).$$
By a well-known generalization of the Castelnuovo--de Franchis Theorem \cite{Cat}, the condition that $\cup_X$ does vanish on decomposable elements amounts to the existence of a fibration $X\rightarrow C$ over a smooth curve of genus $g\geq 2$. Thus, the first part of Theorem~\ref{thmA} applies to all compact K\"ahler manifolds $X$ with irregularity $q(X)\geq 3$ which are not fibred over curves.

\vskip 3pt

We obtain the vanishing in Theorem~\ref{thmA} as a special instance of a general vanishing result for Koszul modules, as explained next. We return to the general situation when $V$ is a vector space, $\iota\colon  K\hookrightarrow \bigwedge^2 V$ is a fixed subspace, and $K^\perp:=\ker(\iota^{\vee})\subseteq \bigwedge^2 V^{\vee}$ is the space of skew-symmetric bilinear forms on $V$ which vanish identically on $K$. It is shown in \cite[Lemma 2.4]{PS15} that the support of the Koszul module $W(V,K)$ in the affine space $V^\vee$ coincides (if non-empty) with the \emph{resonance variety} $\mc{R}(V,K)$ defined as
$$\mc{R}(V,K):=\Bigl\{a\in V^\vee  \, | \, \mbox{ there exists }b\in V^\vee \mbox{ such that } a\wedge b\in K^\perp\setminus \{0\} \Bigr\}\cup \{0\}.$$
In particular, $W(V,K)$ has finite length, that is, $W_q(V,K)=0$ for $q\gg 0$ if and only $\mc{R}(V,K)=\{0\}$. This last condition is equivalent to the fact that the projective subspace $\bb{P}(K^{\perp})\subseteq\bb{P}(\bw^2 V^{\vee})$ is disjoint from the Grassmann variety $\Gr_2(V^{\vee})$ in its Pl\"ucker embedding. Our vanishing result applies uniformly to all subspaces $K$ satisfying this condition, as follows (see Theorem~\ref{thm=main}).

\begin{thm-main*}
\emph{Let $V$ be a complex $n$-dimensional vector space and let $K\subseteq\bw^2 V$ be a subspace such that $\mc{R}(V,K)=\{0\}$. We have that $W_{q}(V,K)=0$ for all $q\geq n-3$}.
\end{thm-main*}

In the paper \cite{AFPRW}, we explain how via a degeneration argument, the Main Theorem gives rise to an essentially elementary proof of  Green's Vanishing Conjecture for syzygies of generic canonical curves. This leads to an alternative approach to Green's conjecture from the one of Voisin's \cite{V02}, \cite{V05}.

The vanishing result in the Main Theorem is responsible for the estimates of the virtual nilpotency classes that appear in Theorems~\ref{thm:growth-degree},~\ref{thmB} and~\ref{thmA}. In addition to the vanishing result, we also provide a sharp upper bound for the Hilbert function of a finite length Koszul module in Theorem~\ref{thm:HilbSeries}. In combination with (\ref{eq:theta-less-W}) this gives the bound on the Chen ranks in Theorem~\ref{thm:Kahler}, and in conjunction with the Bass--Guivarc'h formula (recalled as formula (\ref{thm:bass-guivarch})) it provides the estimate for the degree of polynomial growth in Theorem~\ref{thm:growth-degree}.

To emphasize the robust relationship between algebra and topology in the $1$-formal setting, we end our Introduction summarizing the connection between some of the invariants discussed in this setting.

\begin{thm}\label{thm:equivalences}
 Let $G$ be a finitely generated $1$-formal group, and assume that $n=b_1(G)\geq 3$. The following statements are equivalent:
 \begin{enumerate}
  \item[(a)] $\theta_q(G)=0$ for some (any) $q\gg 0$.
  \item[(b)]  $\theta_{n-1}(G)=0$.
  \item[(c)]  $\cup_G$ does not vanish on any non-zero decomposable elements.
  \item[(d)]  $W_q(G)=0$ for some (any) $q\gg 0$.
  \item[(e)]  $W_{n-3}(G)=0$.
 \end{enumerate}
\end{thm}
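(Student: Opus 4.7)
The plan is to leverage the key equality $\theta_{q+2}(G)=\dim_{\mathbb{C}}W_q(G)$ that follows from (\ref{eq:theta-less-W}) under the $1$-formality hypothesis. This identity transforms the two topological conditions (a) and (b) into statements about the Koszul module $W(G)=W(V,K)$, where $V=H_1(G,\mathbb{C})$ has dimension $n$ and $K^{\perp}=\ker(\cup_G)$. Specifically, (a)$\iff$(d) (after reindexing $q\mapsto q-2$) and (b)$\iff$(e) (the case $q=n-3$, so that $q+2=n-1$). The problem therefore reduces to the purely algebraic equivalence of (c), (d), and (e) for $W(V,K)$.

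For (c)$\iff$(d), I would simply unwind the definition of the resonance variety: condition (c) asserts that $K^{\perp}$ contains no non-zero decomposable bivector, which is exactly the statement $\mc{R}(V,K)=\{0\}$. By \cite[Lemma~2.4]{PS15}, the support of $W(V,K)$ equals $\mc{R}(V,K)$ whenever non-empty, so $W(V,K)$ has finite length---equivalently, $W_q(G)=0$ for $q\gg 0$---if and only if $\mc{R}(V,K)=\{0\}$.

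The implication (c)$\Rightarrow$(e) is a direct application of the Main Theorem to $(V,K)$: once $\mc{R}(V,K)=\{0\}$, one obtains $W_q(G)=0$ for all $q\geq n-3$, in particular $W_{n-3}(G)=0$. For the converse (e)$\Rightarrow$(d), recall that $W(V,K)$ is generated as an $S$-module in degree $0$; consequently the multiplication map $V\otimes W_{q-1}(V,K)\to W_q(V,K)$ is surjective for every $q\geq 1$, so vanishing propagates upward. Hence $W_{n-3}(G)=0$ forces $W_q(G)=0$ for all $q\geq n-3$, which yields (d). The same propagation argument also justifies the ``some (any)~$q\gg 0$'' phrasing in both (a) and (d), since once the vanishing begins it persists.

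The only substantial ingredient is the Main Theorem, which supplies the sharp vanishing range $q\geq n-3$ and is what makes it possible to close the loop by jumping directly from (c) to (e); the remaining two implications in the chain $\text{(c)}\Rightarrow\text{(e)}\Rightarrow\text{(d)}\Rightarrow\text{(c)}$ are formal consequences of the definition of resonance, the generation of $W(V,K)$ in degree $0$, and the $1$-formality identity. I do not anticipate any conceptual obstacle beyond the correct invocation of the Main Theorem.
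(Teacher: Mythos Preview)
Your proposal is correct and follows essentially the same approach as the paper: the paper's argument (given in the paragraph immediately after the statement) is simply a terser version of yours, invoking the Main Theorem for the equivalence of (c), (d), (e) and the $1$-formality equality in (\ref{eq:theta-less-W}) for (a)$\iff$(d) and (b)$\iff$(e). Your explicit chain (c)$\Rightarrow$(e)$\Rightarrow$(d)$\Rightarrow$(c), together with the observation that generation in degree $0$ propagates vanishing upward, is exactly the unpacking the paper leaves implicit.
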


The equivalence of (c), (d), (e) comes from the Main Theorem, since all three conditions characterize finite dimensional Koszul modules. The equivalence of these statements with (a) and (b) follows from the fact that (\ref{eq:theta-less-W}) is an equality for $1$-formal groups.

%Theorem \ref{thmA} follows from the Main Theorem by substituting $V=H_1(G,\mathbb C)$ and letting $K=\mbox{Ker}(\cup_G)^{\perp}$. It turns out that the Main Theorem (which corresponds to Theorem \ref{thm=main} in the paper) can be reduced quickly to the extremal case $m=2n-3$.

\vskip 3pt

\begin{ack}
Above all, we acknowledge with thanks the contribution of Alex Suciu. This project started with the paper \cite{PS15}, and since then we benefited
from numerous discussions with him. We also thank A. Beauville, F. Campana, F. Catanese, D. Eisenbud, B. Farb, B. Klingler, P. Pirola, A. Putman and C. Voisin for interesting discussions related to this circle of ideas. We are grateful to the referees for many useful suggestions that significantly improved the presentation of this paper.

\vskip 3pt

{\small{Aprodu was partially supported by the Romanian Ministry of Research and
Innovation, CNCS - UEFISCDI, grant
PN-III-P4-ID-PCE-2016-0030, within PNCDI III. Farkas was supported by the DFG Grant \emph{Syzygien und Moduli} and by the ERC Advanced Grant SYZYGY of  the European Research Council (ERC) under the European Union Horizon 2020 research and innovation program (grant agreement No. 834172).
 Raicu was supported by the Alfred P. Sloan Foundation and by the NSF Grant No.~1901886. Weyman was partially supported by the Sidney Professorial Fund and the NSF grant No.~1400740.
}}
\end{ack}

\section{Koszul modules}
\label{sec:kmod}

Throughout this article, we denote by $V$ a complex vector space of dimension $n\ge 2$, we write $V^{\vee}=\Hom_{\bb{C}}(V,\bb{C})$ for its dual, and $S=\Sym V$ for the symmetric algebra of~$V$. We consider the standard grading on $S$ where the elements in $V$ are of degree 1. We fix a subspace $K\subseteq \bigwedge^2V$ of dimension~$m$, we denote by $\iota\colon K\to \bw^2V$ the inclusion and let $K^\perp:=\ker(\iota^{\vee}) =\coker(\iota)^\vee$, where $\iota^{\vee}\colon \bw^2 V^{\vee} \lra K^{\vee}$ is the dual of $\iota$.

\subsection{Definitions and basic properties}
\label{subsec:defs}

We recall from \cite{PS15} the definition of Koszul modules. They are defined starting from the classical Koszul differentials
\[
\delta_p\colon \bigwedge^pV\otimes S\to \bigwedge^{p-1}V\otimes S,
\]
\[
\delta_p(v_1 \wedge \cdots \wedge v_p \oo f) = \sum_{j=1}^p (-1)^{j-1} v_1 \wedge \cdots \wedge \widehat{v_j} \wedge \cdots \wedge v_p \oo v_j f.
\]
Note that we have a decomposition $\delta_p=\bigoplus_q\delta_{p,q}$, where
\begin{equation}\label{eq:delta-pq}
\delta_{p,q}\colon \bw^p V \oo \Sym^q V \to \bw^{p-1}V \oo \Sym^{q+1}V,
\end{equation}
and that the differentials $\delta_p$ fit together into the Koszul complex which gives a minimal resolution by free graded $S$-modules of the residue field $\mathbb C$.

\medskip

\begin{defn}[{\cite[\S2]{PS15}}]
The {\em Koszul module} associated to the pair $(V,K)$ is a graded $S$-module denoted by $W(V,K)$,  and defined as
\begin{equation}\label{eq:def-Koszul-module}
W(V,K):=\mathrm{Coker}\Bigl\{\bigwedge^3V\otimes S\lra \Bigl(\bigwedge^2V/K\Bigr)\otimes S\Bigr\}
\end{equation}
where the map $\bigwedge^3V\otimes S\to (\bigwedge^2V/K)\otimes S$ is the composition of the quotient map $\bw^2 V \oo S \to (\bigwedge^2V/K)\otimes S$ with the Koszul differential $\delta_3$. The grading is inherited from the symmetric algebra if we make the convention that $\bigwedge^2V/K$ is placed in degree~$0$ and $\bw^3 V$ is in degree~$1$. The degree $q$ component of $W(V,K)$ is then given by
\begin{equation}
\label{eqnWqDef}
W_q(V,K)=\mathrm{Coker}\Bigl\{\bigwedge^3V\otimes \Sym^{q-1}V\lra \Bigl(\bigwedge^2V/K\Bigr)\otimes \Sym^qV\Bigr\}.
\end{equation}
Note that this grading convention guarantees that $W(V,K)$ is generated in degree $0$. In particular, if $W_q(V,K)=0$ for some $q\geq 0$, then $W_p(V,K)=0$ for all $p \ge q$.
\end{defn}

Since the Koszul complex is exact in homological degree one, we can realize $W(V,K)$ as the middle cohomology of the complex
\begin{equation}\label{eqn:W}
\xymatrixcolsep{5pc}
\xymatrix{
K \oo S \ar[r]^{\delta_2|_{K \oo S}} & V\oo S \ar[r]^{\delta_1} & S
}
\end{equation}
Using the fact that $\ker(\delta_{1,q+1}) = \im(\delta_{2,q})$, we obtain as in \cite[(2.1)]{PS15}
\begin{equation}
\label{eqn:Wq_2}
W_q(V,K)=\mathrm{Im}(\delta_{2,q})/\mathrm{Im}(\delta_{2,q}|_{K\oo S}).
\end{equation}

This construction is natural in the following sense: for $K \subseteq K'$, the identity map of $\bigwedge^2V\otimes S$
induces a surjective morphism of graded $S$-modules
\begin{equation}
\label{eq:wnat}
W(V,K) \twoheadrightarrow W(V,K').
\end{equation}

\begin{ex}\label{ex:extremal-Koszul}
In the extremal cases the Koszul modules are easy to compute:
\begin{itemize}
\item If $K=\bigwedge^2V$ then $W(V,K)=0$;
\item If $K=0$ then $W_q(V,K)=H^0\bigl(\mathbb{P}(V^\vee ),\Omega^1_{\mathbb P(V^{\vee})} (q+2)\bigr)\neq 0$ for all $q\geq 0$.
\end{itemize}
We will therefore assume when necessary that $1\leq m=\mbox{dim}(K) < {n\choose 2}$.
\end{ex}

\begin{ex}[{\cite[\S 6.2]{PS15}}]\label{ex:W(G)}
Any finitely generated group $G$ gives rise to a pair $(V,K)$ by letting $V:=H_1(G,\mathbb C)\cong  H^1(G,\mathbb C)^\vee$ and $K$ be the the dual of the image of the cup--product map $\cup_G\colon \bigwedge^2H^1(G,\mathbb C)\to H^2(G,\mathbb C)$. The corresponding Koszul modules are denoted $W(G)$. Conversely, it was noted in \cite[Proposition~6.2]{PS15} that every Koszul module $W(V,K)$ corresponding to a rational subspace $K\subseteq\bigwedge^2V$ has the form $W(G)$ for some finitely presented group $G$. A more thorough analysis of the Koszul modules associated to groups will be discussed in \S \ref{subsec:top}.
\end{ex}

\subsection{Resonance}
\label{subsec:resonance}

Inspired by work of Green and Lazarsfeld in~\cite{GL91}, Papadima and Suciu  defined in \cite{PS15} resonance varieties in a purely algebraic context as follows.

\begin{defn}\label{def:resonance}
%[{\cite[Definition~2.3]{PS15}}]
The {\em resonance variety} associated  to the pair $(V,K)$ is the locus
\begin{equation}
\label{eq:defr}
\mathcal R(V,K):=\left\{a\in V^\vee  \, : \, \mbox{ there exists }b\in V^\vee \mbox{ such that } a\wedge b\in K^\perp\setminus \{0\} \right\}\cup \{0\}
\end{equation}
\end{defn}

Geometrically, the resonance variety $\mathcal{R}(V,K)$ is the union of $2$-dimensional subspaces of $V^{\vee}$ parametrized by the intersection $\mathbb{P}(K^\perp)\cap\mathrm{Gr}_2(V^\vee )$. More precisely, via the canonical diagram
\[
\xymatrixcolsep{5pc}
\xymatrix{
\left(\mathbb{P}V^\vee\times\mathbb{P}V^\vee\right)\setminus\mathrm{Diag}\ar[r]^{\pi} \ar[d]^{p_1}& \mathrm{Gr}_2(V^\vee )\\
\mathbb{P}V^\vee &
}
\]
the variety $\mathcal{R}(V,K)$ is the affine cone over $p_1(\pi^{-1}(\mathbb P(K^\perp)\cap\mathrm{Gr}_2(V^\vee )))$. It was shown in \cite[Lemma~2.4]{PS15} that resonance and Koszul modules are strongly related: the support of the Koszul module in the affine space $V^\vee $ coincides with the resonance variety away from $0$, and hence $\mathcal R(V,K)=\{0\}$ if and only if $W(V,K)$ is of finite length. In particular
\begin{equation}
\label{eq:mainequiv}
\mathbb P(K^\perp)\cap\mathrm{Gr}_2(V^\vee )=\emptyset \Longleftrightarrow \mathcal R(V,K)= \{0\}
\Longleftrightarrow \dim_{\mathbb C}W(V,K) < \infty.
\end{equation}

Recalling that $m=\mbox{dim}(K)$, it was also proved in \cite[Proposition~2.10]{PS15} that there exists a uniform bound $q(n,m)$ such that $W_q(V,K)=0$ for all $q\ge q(n,m)$
and for all $(V,K)$ satisfying  $\mathbb P(K^\perp)\cap\mathrm{Gr}_2(V^\vee )=\emptyset$.
A central goal of our paper is to determine this bound, which we do in Theorem~\ref{thm=main}. For dimension reasons, if $\mathbb P(K^\perp)\cap\mathrm{Gr}_2(V^\vee )=\emptyset$ then $m\ge 2n-3$. The
{\em borderline case} $m = 2n-3$ is therefore of maximal interest, as illustrated for instance in \S\ref{subsec:HilbSeries} below.

\

\section{Finite length Koszul modules}
\label{sec:vanish}

The goal of this section is to prove the main theorem of our paper, characterizing finite dimensional Koszul modules (those for which the equivalent conditions in (\ref{eq:mainequiv}) hold) in terms of the vanishing of a fixed graded component. More precisely, we prove:

\begin{thm}\label{thm=main}
Let $V$ be a vector space of dimension $n\geq 3$ and let $K\subseteq\bw^2 V$ be an arbitrary subspace. We have that
\begin{equation}\label{eq:thm-main-equiv}
\mc{R}(V,K) = \{0\} \Longleftrightarrow W_{n-3}(V,K) = 0.
\end{equation}
\end{thm}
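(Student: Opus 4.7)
The direction $W_{n-3}(V,K)=0 \Rightarrow \mc{R}(V,K)=\{0\}$ is immediate: since $W(V,K)$ is generated in degree $0$ as an $S$-module, vanishing in degree $n-3$ forces $W_q(V,K)=0$ for all $q\geq n-3$, so $W(V,K)$ has finite length and (\ref{eq:mainequiv}) yields $\mc{R}(V,K)=\{0\}$. For the substantive direction I would argue by induction on $n=\dim V$. The base case $n=3$ is immediate: there $\Gr_2(V^{\vee})=\bb{P}(\bw^2 V^{\vee})$ (every $2$-form on a $3$-space is decomposable), so $\mc{R}(V,K)=\{0\}$ forces $K^{\perp}=0$, whence $K=\bw^2 V$ and $W_0(V,K)=(\bw^2 V)/K=0$.

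For the inductive step, fix an arbitrary nonzero $\ell\in V$ and set $V':=V/\langle\ell\rangle$ and $K':=\pi_{\ell}(K)\subseteq \bw^2 V'$, where $\pi_{\ell}\colon \bw^2 V\twoheadrightarrow \bw^2 V'$ is the induced projection. The first key observation is that resonance descends unconditionally in $\ell$: identifying $\bw^2(V')^{\vee}$ with $\bw^2(\ell^{\perp})\subseteq \bw^2 V^{\vee}$, one verifies the equality $(K')^{\perp} = K^{\perp}\cap \bw^2(\ell^{\perp})$, and therefore
\[
\bb{P}((K')^{\perp})\cap \Gr_2((V')^{\vee})\subseteq \bb{P}(K^{\perp})\cap \Gr_2(V^{\vee}) = \emptyset,
\]
so $\mc{R}(V',K')=\{0\}$ and the inductive hypothesis gives $W_q(V',K')=0$ for every $q\geq n-4$. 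I would then relate $W(V,K)$ to $W(V',K')$ via the quotient $S\twoheadrightarrow S':=\Sym V'$: combining the right exact sequence $W(V,K)(-1)\xrightarrow{\,\ell\,} W(V,K)\to W(V,K)\oo_S S'\to 0$ with the natural surjection $W(V,K)\oo_S S'\twoheadrightarrow W(V',K')$ (obtained by tensoring the presentation (\ref{eq:def-Koszul-module}) with $S'$ and comparing to the analogous presentation of $W(V',K')$) should produce, in each sufficiently high degree $q$, a right-exact complex
\[
W_{q-1}(V,K)\xrightarrow{\,\ell\,} W_q(V,K)\longrightarrow W_q(V',K')\longrightarrow 0,
\]
up to a correction coming from the intersection $(\ell\wedge V)\cap K$. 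Taking $q=n-3$ and using $W_{n-3}(V',K')=0$, multiplication by every $\ell\in V$ would surject $W_{n-4}(V,K)\onto W_{n-3}(V,K)$; iterating this and combining with the socle condition forced by finite length of $W(V,K)$ should then yield $W_{n-3}(V,K)=0$.

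The principal obstacle will be twofold: (a) making the comparison sequence between $W(V,K)\oo_S S'$ and $W(V',K')$ completely explicit, since its kernel involves the interaction of $K$ with $\ell\wedge V$ mediated by the Koszul differentials $\delta_2$ and $\delta_3$; and (b) passing from ``surjectivity of multiplication by every $\ell$'' to the genuine vanishing $W_{n-3}(V,K)=0$, which a priori only captures ``generation from below.'' A cleaner route, which I would pursue in parallel, is to establish simultaneously the sharp Hilbert function bound of Theorem~\ref{thm:HilbSeries} by the same inductive machinery; that formula will specialize at $q=n-3$ to the desired vanishing, so the two results can be established in tandem. Such a dual strategy fits the structural link, mentioned in the Introduction, between the Main Theorem and Green's Conjecture for generic canonical curves, where analogous vanishings are obtained by exhibiting explicit models that saturate a sharp numerical bound.
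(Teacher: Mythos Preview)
Your easy direction is fine and coincides with the paper's. The hard direction, however, follows a route entirely different from the paper's, and as written it does not close.

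The paper does \emph{not} induct on $n$. It passes to the Grassmannian $\GG=\Gr_2(V^{\vee})$, identifies $W(V,K)$ with the cokernel of a map of global sections of sheaves on~$\GG$ (Lemma~\ref{lem:H0-mcS} and (\ref{eq:WVK-from-cohom})), and uses that $\mc{R}(V,K)=\{0\}$ is equivalent to surjectivity of $K\otimes\mc{O}_{\GG}\to\mc{O}_{\GG}(1)$. The Koszul complex of this surjection is then exact; tensoring with $\Sym^{n-3}\mc{Q}$ and running the hypercohomology spectral sequence, Bott vanishing (Theorem~\ref{thm:bott}) kills every $E_1^{-r,r-1}$ with $r\geq 2$, so the only differential hitting $E^{0,0}$ is the $E_1$-differential, which forces $W_{n-3}(V,K)=0$.

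Both obstacles you flag in your inductive scheme are genuine, and neither is resolved. For~(a), the snake lemma applied to the comparison of presentations produces a correction term governed by the cokernel of (essentially) a $\delta_2^{V'}$-type map landing in $\bigl((\ell\wedge V)+K\bigr)/K\otimes S'$, and you give no mechanism to make this vanish in degree $n-3$. Obstacle~(b) is fatal as stated: for a finite length graded $S$-module $M$, surjectivity of multiplication by \emph{every} nonzero $\ell\in V$ from $M_{q-1}$ onto $M_q$ does \emph{not} force $M_q=0$. Concretely, with $\dim V=2$ and basis $x_1,x_2$, let $M=M_0\oplus M_1$ with $M_0=\bb{C}^2$ (basis $e_1,e_2$), $M_1=\bb{C}$, and $x_i\cdot e_j=\delta_{ij}$; then $(ax_1+bx_2)\cdot M_0=M_1$ for every $(a,b)\neq 0$, yet $M_1\neq 0$. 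Your ``socle condition'' gives nothing beyond the tautology $\mf{m}\cdot M_{\mathrm{top}}\subseteq M_{\mathrm{top}+1}=0$, which is orthogonal to the surjectivity you have. Iterating with the inductive information $W_q(V',K')=0$ for $q\geq n-4$ yields at best $W_{n-3}(V,K)=\ell_1\ell_2\,W_{n-5}(V,K)$ for all $\ell_1,\ell_2$, still compatible with $W_{n-3}(V,K)\neq 0$. If you intend to exploit structure specific to Koszul modules to rule this out, that structure must be named and used; the abstract module-theoretic argument does not suffice.

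Finally, the ``parallel route'' via Theorem~\ref{thm:HilbSeries} does not sidestep the issue: in the paper that Hilbert bound is deduced \emph{from} Theorem~\ref{thm=main} (through the transversality criterion of Lemma~\ref{lem:Schubert1} in the borderline case, then a generic linear projection for $m>2n-3$), not independently of it.
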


The assumption that $n\geq 3$ in the above theorem is made in order to avoid trivialities: when $n=1$ we have $\bw^2 V=0$, while for $n=2$ we have that either $K=\bw^2 V$ or $K=0$, both of which have been discussed in Example~\ref{ex:extremal-Koszul}. In addition to the vanishing result above, in the borderline case $\mbox{dim}(K)=2n-3$ we give an exact description of the Hilbert series of the Koszul modules with vanishing resonance, which in turn provides an upper bound for such Hilbert series in the general case $\mbox{dim}(K)\geq 2n-3$, as follows.

\vskip 3pt
%For our next result we continue using the setting of Theorem \ref{thm=main}. 

\begin{thm}
\label{thm:HilbSeries}
Let $V$ be a vector space of dimension $n\geq 3$ and let $K\subseteq\bw^2 V$ be an arbitrary subspace. If $\mathcal{R}(V,K) = \{0\}$, then for $q=0, \ldots, n-4$, we have that
\[
\dim\, W_q(V,K) \leq {n+q-1 \choose q} \frac{(n-2)(n-q-3)}{q+2},
\]
and equality holds for all $0\leq q\leq n-4$ when $\dim(K)=2n-3$.
\end{thm}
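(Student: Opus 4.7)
The plan is to sheafify the Koszul module on $\bb{P}V^{\vee} = \bb{P}^{n-1}$. The inclusion $K \hookrightarrow \bw^2 V$ combined with the second-wedge twisted Euler sequence
\[
0 \lra \Omega^2(2) \lra \bw^2 V \oo \OO \lra \Omega^1(2) \lra 0
\]
produces a morphism $\phi : K \oo \OO \to \Omega^1(2)$. At $[a] \in \bb{P}V^{\vee}$, $\phi$ specializes to the contraction $K \to a^{\perp}$, $v \wedge w \mapsto a(v)w - a(w)v$, whose cokernel vanishes exactly when $a \notin \mc{R}(V,K)$ (cf.~\S\ref{subsec:resonance}). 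Hence the hypothesis $\mc{R}(V,K) = \{0\}$ forces $\phi$ to be surjective on $\bb{P}^{n-1}$, and $\mc{K} := \ker(\phi)$ is a locally free sheaf. Taking cohomology of the twist of $0 \to \mc{K} \to K \oo \OO \to \Omega^1(2) \to 0$ by $\OO(q)$ and using $H^1(\OO(q)) = 0$ for $q \geq 0$ produces the fundamental identification
\[
W_q(V,K) \;\cong\; H^1\bigl(\bb{P}^{n-1}, \mc{K}(q)\bigr) \quad \text{for every } q \geq 0.
\]

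The snake lemma applied to the natural comparison of $0 \to \mc{K} \to K \oo \OO \to \Omega^1(2) \to 0$ with the Euler sequence above produces the resolution
\[
0 \lra \mc{K} \lra \Omega^2(2) \lra Q \oo \OO \lra 0, \qquad Q := \bw^2 V/K.
\]
Bott vanishing gives $H^1(\Omega^2(q+2)) = 0$ on $\bb{P}^{n-1}$ for $n \geq 3$, and a direct calculation using the exactness of the Koszul complex yields $\dim H^0(\Omega^2(q+2)) = \dim \ker(\delta_{2,q}) = \frac{q(n-1)(n-2)}{2(q+2)} \binom{n+q-1}{q}$. The long exact sequence thus gives
\[
\dim W_q(V,K) = \dim H^0(\mc{K}(q)) + \dim(Q) \cdot \binom{n+q-1}{q} - \frac{q(n-1)(n-2)}{2(q+2)} \binom{n+q-1}{q}.
\]
In the borderline case $\dim K = 2n-3$, $\dim Q = \binom{n-2}{2}$, and a routine simplification collapses the last two summands to $\binom{n+q-1}{q} \cdot \frac{(n-2)(n-q-3)}{q+2}$. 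At $q = n-3$ this numerical term vanishes, so $\dim W_{n-3}(V,K) = \dim H^0(\mc{K}(n-3))$, and the Main Theorem (Theorem~\ref{thm=main}) forces $H^0(\mc{K}(n-3)) = 0$. Since $\mc{K}$ is locally free (hence torsion-free), multiplication by $v^{n-3-q}$ for any nonzero $v \in V$ injects $H^0(\mc{K}(q))$ into $H^0(\mc{K}(n-3)) = 0$ for every $0 \leq q \leq n-3$, yielding the asserted equality for $0 \leq q \leq n-4$ in the borderline case.

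For the general upper bound when $\dim K > 2n-3$, I produce a subspace $K' \subseteq K$ with $\dim K' = 2n-3$ and $\mc{R}(V,K') = \{0\}$: the surjection $W(V,K') \twoheadrightarrow W(V,K)$ from~(\ref{eq:wnat}) delivers $\dim W_q(V,K) \leq \dim W_q(V,K')$, which coincides with the claimed bound by the borderline computation. Dually, $K'$ arises by extending $K^{\perp}$ inside $\bw^2 V^{\vee}$ to a $\binom{n-2}{2}$-dimensional subspace whose projectivization avoids $\Gr_2(V^{\vee})$, and since $\Gr_2(V^{\vee})$ has dimension $2n-4$, strictly less than the codimension $2n-3$ of a $\binom{n-2}{2}$-subspace in $\bw^2 V^{\vee}$, a standard incidence-correspondence count produces such an extension generically. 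The main obstacle is the vanishing $H^0(\mc{K}(n-3)) = 0$ in the borderline case: it is the essential input matching the Hilbert function to the desired bound, and it rests squarely on the Main Theorem; absent it, one would need to establish a positivity property of $\mc{K}$ or its dual directly, which is not apparent from the resolution of $\mc{K}$ alone.
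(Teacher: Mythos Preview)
Your proof is correct, and its logical skeleton coincides with the paper's: in the borderline case $\dim K=2n-3$ one shows that
\[
\dim W_q(V,K)=\dim\bigl((K\otimes\Sym^qV)\cap\ker\delta_{2,q}\bigr)+\binom{n+q-1}{q}\frac{(n-2)(n-q-3)}{q+2},
\]
observes that at $q=n-3$ the numerical term vanishes so Theorem~\ref{thm=main} kills the ``defect'' $(K\otimes\Sym^{n-3}V)\cap\ker\delta_{2,n-3}$, and then propagates this vanishing down to all $q\le n-3$ via multiplication by a linear form; the reduction from $\dim K>2n-3$ to the borderline case by a generic linear projection away from $\bb{P}(K^\perp)$ is identical in both arguments.

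The difference is purely in packaging. The paper stays with elementary linear algebra: it reads the defect directly as the kernel of $\delta_{2,q}|_{K\otimes\Sym^qV}$ (Lemma~\ref{lem:Schubert1}), obtains the numerical term from rank--nullity on the Koszul differentials (Proposition~\ref{prop:nonzero}), and phrases the conclusion as ``the presentation map~(\ref{eq:presentation-Wq}) has maximal rank'' (Corollary~\ref{cor:maximal-rank}). You instead sheafify on $\bb{P}V^\vee$, identify the defect with $H^0(\mc{K}(q))$ and $W_q$ with $H^1(\mc{K}(q))$, and extract the same numerical identity from the resolution $0\to\mc{K}\to\Omega^2(2)\to Q\otimes\OO\to 0$ together with Bott vanishing for $H^1(\Omega^2(q+2))$. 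Your route buys a clean geometric interpretation---$\mc{K}$ is a vector bundle precisely under the resonance hypothesis, and the propagation step becomes torsion-freeness of $\mc{K}$---at the cost of invoking more machinery (two short exact sequences and Bott) to reach a dimension count the paper gets in one line. Neither argument avoids the essential dependence on Theorem~\ref{thm=main}, which you correctly flag as the indispensable input.
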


We prove Theorem~\ref{thm=main} using Bott's theorem and a hypercohomology spectral sequence that was featured in the work of Voisin on the Green conjecture \cite{V02}. We recall the special case of Bott's theorem needed for our argument in Section~\ref{subsec:bott}, and prove the vanishing theorem in Section~\ref{subsec:bordervanish}. The proof of Theorem~\ref{thm:HilbSeries} is presented in Section~\ref{subsec:HilbSeries}, and is summarized as follows. We reinterpret the characterization (\ref{eq:thm-main-equiv}) in terms of a transversality condition on $K$ with respect to the kernel of the Koszul differential $\delta_2$, which allows us to prove the conclusion of Theorem~\ref{thm:HilbSeries} in the borderline case. The general case of Theorem~\ref{thm:HilbSeries} reduces to the borderline case via a generic projection argument.

\subsection{Bott's Theorem for Grassmannians}\label{subsec:bott} Our main reference for this topic is \cite[Ch.~4]{weyman}.
Let $\GG=\Gr_2(V^{\vee})$ denote the Grassmannian of $2$--dimensional subspaces of $V^{\vee}$, which we think of by duality as parametrizing $2$-dimensional quotients of $V$. On $\GG$ we then have the tautological exact sequence
\begin{equation}\label{eq:tautological}
0\lra\cU \lra V\oo\mc{O}_{\GG}\lra\mc{Q}\lra 0,
\end{equation}
where $\cU$ (respectively $\mc{Q}$) denotes the universal rank $(n-2)$ sub--bundle (respectively rank $2$ quotient bundle) of the trivial bundle $V$. We write $\mc{O}_{\GG}(1)$ for $\bw^2\mc{Q}$, which is the Pl\"ucker line bundle giving an embedding of $\GG$ into $\bb{P}(\bw^2 V^{\vee})$ as the projectivization of the set of decomposable $2$-forms $a\wedge b$, with $a,b\in V^{\vee}$.

\vskip 4pt

We write $\bb{Z}^r_{\op{dom}}$ for the set of \emph{dominant weights} in $\bb{Z}^r$, that is, tuples $\nu=(\nu_1,\ldots,\nu_r)\in\bb{Z}^r$ with $\nu_1\geq\nu_2\geq\ldots\geq\nu_r$. We write $\bb{S}_{\nu}$ for the \emph{Schur functor} associated with $\nu$, and recall that if $\nu=(a,0,\ldots,0)$ then $\bb{S}_{\nu} = \Sym^a$, and if $\nu=(1,1,\ldots,1)$ then $\bb{S}_{\nu} = \bw^r$. A special case that will be of interest to us is when $\a=(\a_1,\a_2)\in\bb{Z}^2_{\op{dom}}$, in which case we have
\begin{equation}\label{eq:SS-alpha}
\bb{S}_{\a}\mc{Q} = \Sym^{\a_1-\a_2}\mc{Q} \,\oo \mc{O}_{\GG}(\a_2).
\end{equation}
The cohomology groups of sheaves of the form $\bb{S}_{\a}\mc{Q}\oo\bb{S}_{\b}\cU$ are completely described by Bott's Theorem \cite[Corollary~4.1.9]{weyman}. We will only need a special case of this theorem, which we record next.
%The description of cohomology is a special case of Bott's theorem as explained next. We let $\delta=(n-1,n-2,\cdots,0)$, and define $\b = (n-1+\a_1,n-2+\a_2,n-3,\cdots,0)$ and $\tl{\b} = \rm{sort}(\b)-\delta$, where $\rm{sort}(\b)$ denotes the sequence obtained by arranging the entries of $\b$ in non-increasing order. If $\a_1\geq\a_2\geq 0$ then we have $\tl{\b} = (\a_1,\a_2,0,\cdots,0)$, but for example when $n=4$ and $\a=(3,-5)$ then $\tl{\b}=(3,-1,-1,-3)$, while for $n=4$ and $\a=(-4,-6)$ we have $\tl{\b}=(-2,-2,-2,-4)$. It can also be checked that $\tl{\b}$ is a dominant weight in $\bb{Z}^n$ if and only if $\b$ has no repeated entries.

\begin{thm}[Bott]\label{thm:bott}
 Let $\a\in\bb{Z}^2_{\op{dom}}$ and  $\b\in\bb{Z}^{n-2}_{\op{dom}}$, and let $\gamma=(\a|\b)\in\bb{Z}^n$ denote their concatenation. If $\gamma$ is dominant then
 \begin{equation}\label{eq:gamma-dominant}
 H^0(\GG,\bb{S}_{\a}\mc{Q}\oo\bb{S}_{\beta}\cU) = \bb{S}_{\gamma}V,\mbox{ and }H^j(\GG,\bb{S}_{\a}\mc{Q}\oo\bb{S}_{\beta}\cU)=0\mbox{ for all }j>0.
 \end{equation}
 If there exist $1\leq x<y\leq n$ with $\gamma_x-x=\gamma_y-y$, then
\begin{equation}\label{eq:Bott-vanishing}
H^j(\GG,\bb{S}_{\a}\mc{Q}\oo\bb{S}_{\beta}\cU)=0,\mbox{ for all }j\geq 0.
\end{equation}
In particular,
\begin{enumerate}
\item\label{it:1} If $\a_1\geq\a_2\geq 0$, then $H^0(\GG,\bb{S}_{\a}\mc{Q}) = \bb{S}_{\a}V$ and $H^j(\GG,\bb{S}_{\a}\mc{Q})=0$ for all $j>0$.
\item\label{it:2} For all $a,j\geq 0$ we have that $H^j(\GG,\Sym^a\mc{Q}\oo\cU) = 0$.
\item\label{it:3} If $-2\geq\a_1\geq 1-n$, or $-1\geq\a_2\geq 2-n$, then $H^j(\GG,\bb{S}_{\a}\mc{Q})=0$ for all $j\geq 0$.
\end{enumerate}
\end{thm}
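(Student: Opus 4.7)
The plan is to deduce the theorem directly from the general Borel--Weil--Bott theorem for the Grassmannian, exactly as packaged in \cite[Corollary~4.1.9]{weyman}. Identifying $\GG=\Gr_2(V^{\vee})$ with the homogeneous space $GL(V)/P$ for a maximal parabolic $P$, any $GL(V)$-equivariant irreducible bundle on $\GG$ has the form $\bb{S}_{\a}\mc{Q}\oo\bb{S}_{\b}\cU$ with $\a\in\bb{Z}^2_{\op{dom}}$ and $\b\in\bb{Z}^{n-2}_{\op{dom}}$, and its cohomology is governed by the dot-action $\sigma\cdot\gamma:=\sigma(\gamma+\rho)-\rho$ of the symmetric group $S_n$ on the concatenated weight $\gamma=(\a|\b)\in\bb{Z}^n$, where $\rho=(n-1,n-2,\ldots,0)$ is the half-sum of the positive roots.

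First I would recall the content of Bott's theorem in this setting: if $\gamma+\rho$ has two equal coordinates, i.e.\ if there exist $x<y$ with $\gamma_x+n-x=\gamma_y+n-y$ (equivalently $\gamma_x-x=\gamma_y-y$), then $H^{j}(\GG,\bb{S}_{\a}\mc{Q}\oo\bb{S}_{\b}\cU)=0$ for every $j$; otherwise there is a unique $\sigma\in S_n$ making $\sigma(\gamma+\rho)$ strictly decreasing, and the cohomology is concentrated in degree $\ell(\sigma)$, where it equals $\bb{S}_{\sigma\cdot\gamma}V$. The two main assertions of the theorem then become immediate: the hypothesis $\gamma_x-x=\gamma_y-y$ is precisely the singularity criterion, forcing all cohomology to vanish; and when $\gamma$ is already dominant, one has $\sigma=\op{id}$, so $\ell(\sigma)=0$ and $H^0(\GG,\bb{S}_{\a}\mc{Q}\oo\bb{S}_{\b}\cU)=\bb{S}_{\gamma}V$, while higher cohomology vanishes.

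Having established the two general statements, I would derive the three numbered consequences by direct inspection of $\gamma=(\a|\b)$. For part (1), with $\b=0$ and $\a_1\geq\a_2\geq 0$, the weight $\gamma=(\a_1,\a_2,0,\ldots,0)$ is dominant, so $H^0=\bb{S}_{\a}V$ and higher cohomology vanishes. For part (2), with $\a=(a,0)$ and $\b=(1,0,\ldots,0)$, we have $\gamma=(a,0,1,0,\ldots,0)$ and the entries $\gamma_2-2=-2$ and $\gamma_3-3=-2$ agree, triggering the singularity criterion. For part (3), with $\b=0$ so that $\gamma=(\a_1,\a_2,0,\ldots,0)$, the values $\gamma_x-x=-x$ for $x\in\{3,\ldots,n\}$ sweep out $\{-n,\ldots,-3\}$; the hypothesis $-2\geq\a_1\geq 1-n$ places $\gamma_1-1=\a_1-1$ in that same set, and analogously $-1\geq\a_2\geq 2-n$ places $\gamma_2-2=\a_2-2$ there, producing in each case the desired coincidence.

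Since Bott's theorem is being invoked as a black box, there is essentially no obstacle here; the only subtlety is matching conventions with \cite{weyman} and verifying that the paper's singularity condition $\gamma_x-x=\gamma_y-y$ corresponds to the choice $\rho=(n-1,n-2,\ldots,0)$. The entire content of the statement is thus a translation of the standard dot-action vanishing and regularity dichotomy into the combinatorics of pairs $(\a,\b)$ naturally indexing $GL(V)$-equivariant bundles on $\Gr_2(V^{\vee})$.
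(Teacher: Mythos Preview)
Your proposal is correct and matches the paper's treatment exactly: the paper also cites \cite[Corollary~4.1.9]{weyman} for the two general statements and then derives (1)--(3) by the same case analysis, taking $(x,y)=(2,3)$ for part~(2) and, for part~(3), $(x,y)=(1,1-\a_1)$ or $(x,y)=(2,2-\a_2)$, which is precisely the coincidence you identify.
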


To see how (\ref{it:1}) and (\ref{it:3}) follow from the general statement of the theorem we specialize to $\b=(0,0,\ldots,0)$, so that $\gamma=(\a_1,\a_2,0,\ldots,0)$. In case (\ref{it:1}) we have that $\gamma$ is dominant so we can apply (\ref{eq:gamma-dominant}), while in case (\ref{it:3}) we apply (\ref{eq:Bott-vanishing}) with $(x,y) = (1,1-\a_1)$ when $-2\geq\a_1\geq 1-n$, and with $(x,y) = (2,2-\a_2)$ when $-1\geq\a_2\geq 2-n$. To see how (\ref{it:2}) follows from (\ref{eq:Bott-vanishing}) we note that $\a=(a,0)$, $\b=(1,0,\ldots,0)$, and take $(x,y) = (2,3)$.

%Continuing with the earlier examples, we obtain for $n=4$ that
%\[H^2(\GG,\bb{S}_{(3,-5)}\mc{Q}) = \bb{S}_{(3,-1,-1,-3)}V\mbox{ and }H^4(\GG,\bb{S}_{(-4,-6)}\mc{Q}) = \bb{S}_{(-2,-2,-2,-4)}V.\]

\subsection{Vanishing for Koszul modules}
\label{subsec:bordervanish}

We begin by giving a geometric construction of $\ker(\delta_1)$, where $\delta_1\colon V\oo S\lra S$ is the first Koszul differential. We let $\GG=\Gr_2(V^{\vee})$ as in the previous section, and consider
\[\mc{S} = \Sym_{\mc{O}_{\GG}}(\mc{Q}) = \mc{O}_{\GG} \oplus \mc{Q} \oplus \Sym^2\mc{Q} \oplus \cdots\]
as a sheaf of graded $\mc{O}_{\GG}$-algebras on $\GG$. Locally on $\GG$, the sheaf $\mc{S}$ can be identified with a polynomial ring in two variables, where $\mc{Q}$ is the space of linear forms. We can then define the Koszul differentials
\[\delta_1^{\mc{Q}} \colon \mc{Q} \oo_{\mc{O}_{\GG}} \mc{S} \lra \mc{S}\quad\mbox{ and }\quad\delta_2^{\mc{Q}} \colon \bw^2\mc{Q} \oo_{\mc{O}_{\GG}} \mc{S} \lra \mc{Q} \oo_{\mc{O}_{\GG}} \mc{S}\]
as in Section~\ref{subsec:defs}. Recalling that $\bw^2\mc{Q}=\mc{O}_{\GG}(1)$, we write $\bw^2\mc{Q} \oo_{\mc{O}_{\GG}} \mc{S}$ simply as $\mc{S}(1)$.

\begin{lem}\label{lem:H0-mcS} We have a commutative diagram of graded $S$--modules
\begin{equation}\label{eq:H0-mcS}
\begin{gathered}
\xymatrix{
& \bw^2 V \oo S \ar[rr]^{\delta_2} \ar[d]_{f_2} & & V \oo S \ar[rrr]^{\delta_1} \ar[d]_{f_1} & & & S \ar[d]_{f_0} \\
0\ar[r] & H^0(\GG,\mc{S}(1)) \ar[rr]^{H^0(\GG,\delta_2^{\mc{Q}})} & & H^0(\GG,\mc{Q} \oo \mc{S}) \ar[rrr]^{H^0(\GG,\delta_1^{\mc{Q}})} & & & H^0(\GG,\mc{S}) \\
}
\end{gathered}
\end{equation}
where the maps $f_1$ and $f_0$ are isomorphisms. Moreover, we have a natural isomorphism between $\ker(\delta_1)$ and $H^0(\GG,\mc{S}(1))$.
\end{lem}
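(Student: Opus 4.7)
The plan is to view the bottom row of (\ref{eq:H0-mcS}) as arising from the sheafy Koszul complex $\bw^\bullet \mc{Q} \oo \mc{S}$ on $\GG$, and to extract all three vertical maps from the universal surjection $V \oo \mc{O}_\GG \twoheadrightarrow \mc{Q}$ of the tautological sequence (\ref{eq:tautological}). Naturality of the Koszul differentials with respect to this surjection produces a morphism of complexes of sheaves on $\GG$; applying $H^0(\GG,-)$ recovers the two rows of (\ref{eq:H0-mcS}) together with the vertical maps $f_0, f_1, f_2$, and commutativity of both squares follows automatically from this construction.

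To prove that $f_0$ and $f_1$ are isomorphisms I would invoke Bott's Theorem~\ref{thm:bott} twice. Applied with $\a = (q,0)$, Theorem~\ref{thm:bott}(\ref{it:1}) identifies $H^0(\GG, \Sym^q \mc{Q})$ with $\Sym^q V$ and kills all higher cohomology, so $f_0$ is an isomorphism in each graded piece. For $f_1$ I would tensor (\ref{eq:tautological}) by $\Sym^q \mc{Q}$, obtaining
\begin{equation*}
0 \lra \cU \oo \Sym^q \mc{Q} \lra V \oo \Sym^q \mc{Q} \lra \mc{Q} \oo \Sym^q \mc{Q} \lra 0,
\end{equation*}
and then use Theorem~\ref{thm:bott}(\ref{it:2}) to deduce $H^j(\GG, \cU \oo \Sym^q \mc{Q}) = 0$ for all $j \geq 0$. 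Combined with $H^0(\GG, V \oo \Sym^q \mc{Q}) = V \oo \Sym^q V$, the long exact sequence collapses to the required isomorphism $V \oo \Sym^q V \xrightarrow{\sim} H^0(\GG, \mc{Q} \oo \Sym^q \mc{Q})$.

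For the final claim $\ker(\delta_1) \cong H^0(\GG, \mc{S}(1))$ the crucial input is that $\mc{Q}$ has rank~$2$, so $\bw^p \mc{Q} = 0$ for $p \geq 3$, and consequently the sheafy Koszul complex yields honest short exact sequences
\begin{equation*}
0 \lra \bw^2 \mc{Q} \oo \Sym^{q-1} \mc{Q} \lra \mc{Q} \oo \Sym^q \mc{Q} \lra \Sym^{q+1} \mc{Q} \lra 0, \quad q \geq 0,
\end{equation*}
with the convention $\Sym^{-1} \mc{Q} = 0$. Since $\bw^2 \mc{Q} \oo \Sym^{q-1} \mc{Q} \simeq \bb{S}_{(q,1)} \mc{Q}$ has vanishing higher cohomology by Theorem~\ref{thm:bott}(\ref{it:1}), the long exact sequence in cohomology reduces to a short exact sequence of global sections. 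This identifies $H^0(\GG, \mc{S}(1))$ with $\ker H^0(\GG, \delta_1^{\mc{Q}})$, and combined with the isomorphisms $f_0, f_1$ and commutativity of (\ref{eq:H0-mcS}) yields the claim.

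The main obstacle is bookkeeping: aligning the grading conventions between $\mc{S}(1) = \bw^2 \mc{Q} \oo \mc{S}$ and the degree shift induced by $\delta_1$ on $V \oo S$, and verifying carefully that the morphism of Koszul complexes induced by $V \oo \mc{O}_\GG \twoheadrightarrow \mc{Q}$ produces precisely the maps $f_0, f_1, f_2$ of the diagram. Once this alignment is carried out, the rest of the argument reduces to the two Bott vanishing inputs described above.
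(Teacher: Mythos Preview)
Your proposal is correct and follows essentially the same route as the paper: both construct the diagram from the naturality of the Koszul complex under the surjection $V\otimes\mc{O}_{\GG}\twoheadrightarrow\mc{Q}$, both handle $f_1$ by tensoring the tautological sequence with $\Sym^q\mc{Q}$ and invoking Theorem~\ref{thm:bott}(\ref{it:2}), and both deduce the identification of $\ker(\delta_1)$ with $H^0(\GG,\mc{S}(1))$ from the rank-$2$ Koszul exact sequence on $\GG$ together with left exactness of $H^0$.

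The one place you are slightly more casual than the paper is the treatment of $f_0$. You cite Theorem~\ref{thm:bott}(\ref{it:1}) to conclude $H^0(\GG,\Sym^q\mc{Q})\cong\Sym^q V$, but as stated that result only gives an abstract isomorphism of vector spaces; it does not immediately say that the \emph{particular} map $f_0$ induced by $\Sym^q V\otimes\mc{O}_{\GG}\twoheadrightarrow\Sym^q\mc{Q}$ realizes it. The paper closes this gap by a short geometric argument: it interprets $\mc{S}$ via the total space $Y=\relSpec_{\GG}(\mc{S})$ of $\mc{Q}^{\vee}$ and observes that the projection $Y\to V^{\vee}$ is surjective, forcing $f_0$ to be injective (and hence an isomorphism by the dimension count from Bott). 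Your argument can be repaired with a single sentence, e.g.\ by noting that $f_0$ is $GL(V)$-equivariant and nonzero (it is the identity in degree~$0$), so Schur's lemma applied to the irreducible representation $\Sym^q V$ forces it to be an isomorphism; but as written this step is a small gap you should fill.
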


 \begin{proof}
 If we identify $S\oo\mc{O}_{\GG}$ with $\Sym_{\mc{O}_{\GG}}(V\oo\mc{O}_{\GG})$ then the tautological quotient map $V\oo\mc{O}_{\GG}\onto\mc{Q}$ in (\ref{eq:tautological}) yields a natural surjection
 \[S\oo\mc{O}_{\GG} \onto \mc{S}\]
 which can be thought of locally on $\GG$ as realizing the polynomial ring $\mc{S}$ in two variables as a quotient of $S$ by the ideal generated by the linear forms in $\cU$. The naturality of the Koszul complex yields a commutative diagram
 \[
 \xymatrix{
& \bw^2 V \oo S \oo \mc{O}_{\GG} \ar[r] \ar[d] & V \oo S \oo \mc{O}_{\GG} \ar[r] \ar[d] & S \oo \mc{O}_{\GG} \ar[d] \\
0 \ar[r] & \bw^2 \mc{Q} \oo \mc{S} \ar[r] & \mc{Q} \oo \mc{S} \ar[r] & \mc{S} \\
 }
 \]
 where the bottom row is the full Koszul complex since $\mc{Q}$ has rank $2$. The diagram (\ref{eq:H0-mcS}) is then obtained by applying the global sections functor $H^0(\GG,\bullet)$. It is sufficient to prove that $f_0$ and $f_1$ are isomorphisms, since this implies that $f_1$ induces an isomorphism between $\ker(\delta_1)$ and $\ker(H^0(\GG,\delta_1^{\mc{Q}}))$, and the latter is in turn is isomorphic to $H^0(\GG,\mc{S}(1))$ via the map $H^0(\GG,\delta_2^{\mc{Q}})$.

\vskip 4pt

To prove that $f_0$ is an isomorphism it suffices to prove it is injective, since for each~$q$ we have that $H^0(\GG,\Sym^q\mc{Q}) = \Sym^q V$ by conclusion (\ref{it:1}) of Theorem~\ref{thm:bott}. If we let $Y = \ul{\rm{Spec}}_{\GG}(\mc{S})$ denote the total space of the bundle $\mc{Q}^{\vee}$ (see \cite[Exercise~II.5.18]{hartshorne}), then $Y$ is a subbundle of the trivial bundle $V^{\vee}$ over $\GG$, which can be described explicitly as
\[ Y = \Bigl\{ (c,[a\wedge b]) \in V^{\vee} \times \GG : c\in\mbox{Span}(a,b)\Bigr\}\]
If we write $\pi\colon Y\to V^{\vee}$ for the natural projection map, then $\pi$ is surjective and therefore the induced pull-back map $f_0\colon S\lra H^0(V^{\vee},\pi_*\mc{O}_Y)=H^0(\GG,\mc{S})$ is injective, as desired.

To prove that $f_1$ is an isomorphism, we observe that it factors as the composition of
\[V\oo S \overset{\op{id}_V\oo f_0}{\lra} V \oo H^0(\GG,\mc{S}) = H^0(\GG,V\oo\mc{S}),\]
which is an isomorphism, with the map $H^0(\GG,V\oo\mc{S})\lra H^0(\GG,\mc{Q}\oo\mc{S})$ induced by the tautological surjection $V\oo\mc{O}_{\GG}\onto\mc{Q}$. To prove that this map is an isomorphism, it is enough to check that
\[ H^0(\GG,\cU\oo\Sym^q\mc{Q}) = H^1(\GG,\cU \oo\Sym^q\mc{Q}) = 0\mbox{ for all }q\geq 0,\]
which follows from conclusion (\ref{it:2}) of Theorem~\ref{thm:bott}.
\end{proof}

We next recall that $W(V,K)$ can be thought of as the middle cohomology of (\ref{eqn:W}), or equivalently as
\[
\begin{aligned}
\xymatrixcolsep{5pc}
W(V,K) &= \coker\Bigl\{
\xymatrix{
K \oo S \ar[rr]^{\delta_2|_{K \oo S}} & & \ker(\delta_1)
}\Bigr\} \\
&\cong \coker\Bigl\{
\xymatrix{
K \oo S \ar[rr]^{f_2|_{K \oo S}} & & H^0(\GG,\mc{S}(1))
}\Bigr\} \\
\end{aligned}
\]
where the isomorphism above follows from Lemma~\ref{lem:H0-mcS}. Using again the isomorphism $f_0\colon S\lra H^0(\GG,\mc{S})$ and the fact that $K\oo H^0(\GG,\mc{S}) = H^0(\GG,K\oo\mc{S})$, we can rewrite
\begin{equation}\label{eq:WVK-from-cohom}
 W(V,K) = \coker\Bigl\{
\xymatrix{
H^0(\GG,K\oo\mc{S}) \ar[rr]^{H^0(\GG,\eta)} & & H^0(\GG,\mc{S}(1))
}\Bigr\}
\end{equation}
where
\begin{equation}\label{eq:def-eta}
\eta\colon K\oo\mc{S} \lra \mc{S}(1)\mbox{ is induced by }K\oo\mc{O}_{\GG}\hookrightarrow\bw^2 V \oo\mc{O}_{\GG} \onto \bw^2\mc{Q} = \mc{O}_{\GG}(1).
\end{equation}
Equipped with the description (\ref{eq:WVK-from-cohom}) of the Koszul modules, we can now give the proof of our main theorem.

\vskip 4pt

\begin{proof}[Proof of Theorem~\ref{thm=main}]
If $W_{n-3}(V,K)= 0$, then $\dim\, W(V,K)< \infty$, since the graded module $W(V,K)$ is generated in degree $0$.
It follows from \eqref{eq:mainequiv} that $\mathcal{R} (V,K)=\{ 0\}$.

\vskip 3pt

Conversely, suppose that $\mathcal{R} (V,K)=\{ 0\}$, which by (\ref{eq:mainequiv}) is equivalent to the fact that $\bb{P}(K^{\perp})$ is disjoint from the Grassmannian $\GG$, which in turn is equivalent to the fact that the linear system of divisors $(K,\mc{O}_{\GG}(1))$ is base point free, or to the fact that the map $K\oo\mc{O}_{\GG}\lra\mc{O}_{\GG}(1)$ is surjective. The Koszul complex associated to this map is therefore exact, and takes the form
\[\mc{K}^{\bullet}: 0\lra \bw^m K\oo \mc{O}_{\GG}(1-m) \lra \cdots \lra \bw^2 K\oo \mc{O}_{\GG}(-1) \lra K \oo \mc{O}_{\GG} \lra \mc{O}_{\GG}(1) \lra 0\]
where we use a cohomological grading with $\mc{K}^{-i} = \bw^i K \oo \mc{O}_{\GG}(1-i)$. Tensoring with $\Sym^{n-3}\mc{Q}$ preserves exactness, so the hypercohomology of $\mc{K}^{\bullet}\oo\Sym^{n-3}\mc{Q}$ vanishes identically. The hypercohomology spectral sequence associated with $\mc{K}^{\bullet}\oo\Sym^{n-3}\mc{Q}$ then yields that
 \[ E_1^{-i,j} = H^j\Bigl(\GG,\bw^i K\oo \mc{O}_{\GG}(1-i) \oo \Sym^{n-3}\mc{Q}\Bigr)  \Longrightarrow 0.\]
 Since $\Sym^{n-3}\mc{Q}\oo\mc{O}_{\GG}(1-i) = \bb{S}_{(n-2-i,1-i)}\mc{Q}$ by (\ref{eq:SS-alpha}), we can rewrite the terms in the spectral sequence as
 \begin{equation}\label{eq:Eij}
  E_1^{-i,j} = H^j\Bigl(\GG,\bw^i K \oo \bb{S}_{(n-2-i,1-i)}\mc{Q}\Bigr) = \bw^i K \oo H^j\bigl(\GG,\bb{S}_{(n-2-i,1-i)}\mc{Q}\bigr).
 \end{equation}
 We have using (\ref{eq:WVK-from-cohom}) the following identification
 \[W_{n-3}(V,K) = \coker\bigr\{E^{-1,0}_1 \lra E^{0,0}_1\bigr\}\]
 and we suppose by contradiction that this map is not surjective. Since $E_{\infty}^{0,0}=0$, there must be some \emph{non-zero} differential
 \[E^{-r,r-1}_r \lra E_r^{0,0}\mbox{ for }r\geq 2,\mbox{ or }E_r^{0,0} \lra E_r^{r,1-r}\mbox{ for }r\geq 1.\]
 Since $E_1^{r,1-r}=0$ for all $r\geq 1$ it follows that $E_r^{r,1-r}=0$ as well, so the latter case does not occur. To prove that the former case does not occur either and obtain a contradiction, it suffices to check that $E^{-r,r-1}_1=0$ for all $r\geq 2$, which follows from (\ref{eq:Eij}) if we can prove that
 \begin{equation}\label{eq:vanishing-Hr-1}
 H^{r-1}\bigl(\GG,\bb{S}_{(n-2-r,1-r)}\mc{Q}\bigr) = 0\mbox{ for }r\geq 2.
 \end{equation}
 If we write $\a=(n-2-r,1-r)$ then for $2\leq r\leq n-1$ we have that $-1\geq\a_2\geq 2-n$, and for $n\leq r\leq 2n-3$ we have that $-2\geq\a_1\geq 1-n$, so (\ref{eq:vanishing-Hr-1}) follows from conclusion (\ref{it:3}) of Theorem~\ref{thm:bott}. When $r>2n-3$ we get that $r-1>2n-4=\dim(\GG)$, so (\ref{eq:vanishing-Hr-1}) is a consequence of Grothendieck's vanishing theorem \cite[Theorem~III.2.7]{hartshorne}.
\end{proof}

%
%
%\section{Applications}
%\label{sec:appl}

\subsection{The Hilbert series of Koszul modules}\label{subsec:HilbSeries}
The goal of this section is to prove Theorem~\ref{thm:HilbSeries}. Recall from equation \eqref{eqn:Wq_2} that $W_q(V,K)$ is obtained as the cokernel of the map
\begin{equation}\label{eq:presentation-Wq}
\delta_{2,q}\colon K\otimes \Sym^q V \lra \Im(\delta_{2,q})
\end{equation}
and in particular we have that $W_q(V,K) \neq 0$ when $\dim(\Im(\delta_{2,q}))>\dim(K\otimes \Sym^qV)$. We say that the {\em divisorial case} occurs when the map (\ref{eq:presentation-Wq}) is given by a square matrix, i.e.
\[
\mathrm{dim}\, (K\otimes \Sym^qV)=\mathrm{dim}\left(\mathrm{Im}(\delta_{2,q})\right).
\]

%Let us focus on the most important case, namely, $m=2n-3$. Our goal is to prove that $W_{n-3}(V,K)=0$,
%for all $K\in \mathrm{Gr}_{2n-3}(\bigwedge ^2V)$ with $\mathcal{R} (V,K)=\{ 0\}$. For $n\le 3$, clearly $W(V,K)=0$,
%for all $K\in \mathrm{Gr}_{2n-3}(\bigwedge ^2V)$. Thus, we may also assume that $n\ge 4$.

\begin{prop}
\label{prop:nonzero}
Let $K\subseteq \bigwedge^2V$ be an arbitrary subspace of dimension $m=2n-3$. For any $0\le q\le n-4$ we have $W_q(V,K)\ne 0$, and $q=n-3$ is a divisorial case.
\end{prop}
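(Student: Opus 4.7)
The plan is to reduce this to a direct dimension count, using the presentation of $W_q(V,K)$ in \eqref{eq:presentation-Wq} as the cokernel of the map $\delta_{2,q}:K\otimes\Sym^qV\to \Im(\delta_{2,q})$. The non-vanishing of $W_q$ will follow as soon as we verify that the target has strictly larger dimension than the source, and the divisorial case at $q=n-3$ will correspond to an equality of the two dimensions.

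First I would compute the source: $\dim(K\otimes\Sym^qV)=(2n-3)\binom{n+q-1}{q}$. For the target, I would use the exactness of the Koszul complex at $V\otimes\Sym^{q+1}V$, which yields $\Im(\delta_{2,q})=\ker(\delta_{1,q+1})$; since $\delta_{1,q+1}:V\otimes\Sym^{q+1}V\to\Sym^{q+2}V$ is the multiplication map (surjective in the range considered), we obtain
\[
\dim\Im(\delta_{2,q})=n\binom{n+q}{q+1}-\binom{n+q+1}{q+2}.
\]
A short simplification, extracting the common factor $(n+q)!/[(q+2)!(n-1)!]$ from the two binomials, rewrites this as
\[
\dim\Im(\delta_{2,q})=\binom{n+q-1}{q}\cdot\frac{(n-1)(n+q)}{q+2}.
\]

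The comparison with the source reduces to evaluating the ratio $(n-1)(n+q)/[(q+2)(2n-3)]$. Cross-multiplying and rearranging yields the identity
\[
(n-1)(n+q)-(q+2)(2n-3)=(n-2)(n-3-q),
\]
so the ratio is $\geq 1$ precisely when $q\leq n-3$, with equality exactly at $q=n-3$. Consequently, for $0\leq q\leq n-4$ the target of $\delta_{2,q}|_{K\otimes\Sym^qV}$ has strictly larger dimension than the source, forcing $W_q(V,K)\neq 0$, and at $q=n-3$ the source and target have the same dimension, so the map is given by a square matrix — exactly the divisorial case.

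There is no real obstacle here beyond the bookkeeping in the binomial simplification; the only minor care needed is to verify that the multiplication map $\delta_{1,q+1}$ really is surjective (which is immediate, as the Koszul complex resolves $\mathbb{C}$, so it is exact at $S$ in all positive degrees), and to note that the inequality $\dim W_q\geq \dim\Im(\delta_{2,q})-\dim(K\otimes\Sym^qV)$ is what one actually uses for non-vanishing. No information about the specific subspace $K$ is required for these assertions, which makes the result uniform over all $K$ of dimension $2n-3$.
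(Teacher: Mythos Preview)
Your proof is correct and follows essentially the same approach as the paper: both arguments compute $\dim(K\otimes\Sym^qV)$ and $\dim\Im(\delta_{2,q})$ via the exactness of the Koszul complex, and then verify directly that their difference equals $\binom{n+q-1}{q}\frac{(n-2)(n-3-q)}{q+2}$, which is positive for $0\le q\le n-4$ and zero at $q=n-3$. The only cosmetic difference is that you first simplify $\dim\Im(\delta_{2,q})$ to $\binom{n+q-1}{q}\frac{(n-1)(n+q)}{q+2}$ before comparing, whereas the paper computes the difference in one step.
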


\proof
Using the fact that $\dim\Sym^k V= {n+k-1 \choose k}$ and $\dim(K)=2n-3$ we obtain
\[
\mathrm{dim}\, (K\otimes \Sym^qV)=(2n-3){n+q-1 \choose q}.
\]
The exactness of the Koszul complex implies that $\delta_{1,q+1}$ is surjective and that we have the equality $\Im(\delta_{2,q}) = \ker(\delta_{1,q+1})$, so
\[
\mathrm{dim}(\Im(\delta_{2,q}))= \mathrm{dim}\, (V\otimes \Sym^{q+1}V) - \mathrm{dim}\, (\Sym^{q+2}V) = n\cdot{n+q\choose q+1} - {n+q+1\choose q+2},
\]
A direct computation leads for $q\ge 0$ to the formula
%\[
%\mathrm{dim}\, (K\otimes \Sym^qV)- \mathrm{dim}\, \mathrm{Im}(\delta_{2,q})= -\frac{1}{q+2} \frac{(n+q-1)\cdots n}{1\cdots q}(n^2-nq-5n+2q+6),
%\]
\begin{equation}
\label{eqn:dimW}
\mathrm{dim}(\Im(\delta_{2,q})) - \mathrm{dim}\, (K\otimes \Sym^qV)= {n+q-1 \choose q} \frac{(n-2)(n-q-3)}{q+2},
%\frac{1}{q+2} \frac{(n+q-1)\cdots n}{1\cdots q}(n-2)(n-3-q),
\end{equation}
which is positive for $0\leq q\leq n-4$ and vanishes for $q=n-3$, proving that $W_q(V,K) \neq 0$ for $0\leq q\leq n-4$, and that the case $q=n-3$ is divisorial.
\endproof

We next analyze in more detail the divisorial case, where the vanishing of $W_{n-3}(V,K)$ can be rephrased into a transversality condition as follows.

\begin{lem}
\label{lem:Schubert1}
Let $K\subseteq \bigwedge^2V$ be a subspace of dimension $2n-3$. The following are equivalent:
\begin{enumerate}
\item[(a)] $W_{n-3}(V,K) = 0$.
\item[(b)] $(K\otimes \Sym^{n-3}V)\cap \ker(\delta_{2,n-3})=0$.
\item[(c)] $(K\otimes \Sym^{q}V)\cap \ker(\delta_{2,q})=0$ for all $q\le n-3$.
\end{enumerate}
\end{lem}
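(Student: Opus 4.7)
My plan is to use the divisorial dimension count from Proposition~\ref{prop:nonzero} together with the $S$-linearity of the Koszul differential $\delta_2$.

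First, I would establish the equivalence (a)$\Leftrightarrow$(b). Recall from \eqref{eqn:Wq_2} that $W_{n-3}(V,K)$ is the cokernel of the map
\[\delta_{2,n-3}|_{K\otimes \Sym^{n-3}V}: K\otimes \Sym^{n-3}V \lra \Im(\delta_{2,n-3}),\]
so (a) is equivalent to the surjectivity of this map. By Proposition~\ref{prop:nonzero}, the case $q=n-3$ is divisorial when $\dim(K)=2n-3$, meaning that the source and target of the above map have the same (finite) dimension. Consequently surjectivity is equivalent to injectivity, and injectivity is precisely the statement that the kernel of $\delta_{2,n-3}$ intersects $K\otimes\Sym^{n-3}V$ trivially, i.e.\ condition (b).

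The implication (c)$\Rightarrow$(b) is immediate by specializing $q=n-3$.

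For (b)$\Rightarrow$(c), I would argue by contradiction using that $\delta_2$ is $S$--linear. Suppose some $q\le n-3$ admits a nonzero element $x\in (K\otimes\Sym^q V)\cap\ker(\delta_{2,q})$. Pick any nonzero $f\in\Sym^{n-3-q}V$. Since $K\otimes S$ is a free $S$--module and multiplication by $f$ in the polynomial ring $S$ is injective, the element $f\cdot x\in K\otimes\Sym^{n-3}V$ is nonzero. The $S$--linearity of the Koszul differential gives
\[\delta_{2,n-3}(f\cdot x) = f\cdot \delta_{2,q}(x) = 0,\]
so $f\cdot x$ is a nonzero element of $(K\otimes\Sym^{n-3}V)\cap\ker(\delta_{2,n-3})$, contradicting (b).

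I do not expect a serious obstacle here: the only slightly non-formal ingredient is the dimension count in Proposition~\ref{prop:nonzero}, which is already established, and the rest is a clean application of the fact that Koszul differentials commute with multiplication by polynomials.
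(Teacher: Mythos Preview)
Your proposal is correct and follows essentially the same approach as the paper: both use the divisorial dimension count from Proposition~\ref{prop:nonzero} to reduce (a)$\Leftrightarrow$(b) to an injectivity statement, and both prove (b)$\Rightarrow$(c) by multiplying a putative kernel element up to degree $n-3$ using the $S$-linearity of $\delta_2$. The only cosmetic difference is that the paper multiplies by a single linear form at a time (observing that this gives an injection between successive intersections), whereas you multiply by a polynomial of degree $n-3-q$ in one step.
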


\proof
It is clear that (c) implies (b), while for the reverse implication it suffices to observe that multiplication by a linear form in $V$ yields for each $q\geq 0$ an injective map
from $(K\otimes \Sym^{q}V)\cap \ker(\delta_{2,q})$ to $(K\otimes \Sym^{q+1}V)\cap \ker(\delta_{2,q+1})$.

Using the isomorphism $\im(\delta_{2,n-3}) = (\bw^2 V \oo \Sym^{n-3}V) / \ker(\delta_{2,n-3})$ and (\ref{eqn:Wq_2}) we get
\begin{equation}\label{eq:Wn-3=quot}
W_{n-3}(V,K)=\bigl(\bigwedge^2V\otimes \Sym^{n-3}V\bigr)/\bigl(K\otimes \Sym^{n-3}V+\ker(\delta_{2,n-3})\bigr).
\end{equation}
Since we are in the divisorial case we have $\dim(\im(\delta_{2,n-3}))=\dim(K\otimes \Sym^{n-3}V)$, thus
\begin{equation}\label{eq:dimensions-match}
\mathrm{dim}(\bigwedge^2V\otimes \Sym^{n-3}V)=\mathrm{dim}(K\otimes \Sym^{n-3}V)+\mathrm{dim}(\ker(\delta_{2,n-3})).
\end{equation}
Combining (\ref{eq:Wn-3=quot}) with (\ref{eq:dimensions-match}) we conclude that $W_{n-3}(V,K)=0$ if and only if the sum $K\otimes \Sym^{n-3}V+\ker(\delta_{2,n-3})$ is direct, proving the equivalence of (a) and (b).
\endproof

\begin{cor}\label{cor:maximal-rank}
If $K\subseteq\bigwedge^2V$ has dimension $2n-3$ and $\mathcal{R}(V,K) = \{0\}$, then the map (\ref{eq:presentation-Wq}) is of maximal rank for all $q$.
\end{cor}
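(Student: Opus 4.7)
The plan is to split the statement according to the sign of the quantity $\dim\,\Im(\delta_{2,q}) - \dim(K\otimes\Sym^qV)$ computed in \eqref{eqn:dimW}, and then invoke the two main inputs already established: the Main Theorem~\ref{thm=main} on one hand, and Lemma~\ref{lem:Schubert1} on the other.

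First I would note what ``maximal rank'' requires in each range. From \eqref{eqn:dimW} (which applies since $\dim K = 2n-3$), the difference $\dim\,\Im(\delta_{2,q}) - \dim(K\otimes\Sym^qV)$ equals $\binom{n+q-1}{q}\cdot(n-2)(n-q-3)/(q+2)$, which is strictly positive for $0\le q\le n-4$, zero for $q=n-3$, and strictly negative for $q\ge n-2$. Consequently, the map $\delta_{2,q}\colon K\otimes\Sym^qV \to \Im(\delta_{2,q})$ has maximal rank iff it is injective for $q\le n-3$, bijective for $q=n-3$, and surjective for $q\ge n-2$.

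Next, for the injectivity half ($q\le n-3$) I would invoke Lemma~\ref{lem:Schubert1}: the hypothesis $\mc{R}(V,K)=\{0\}$ together with Theorem~\ref{thm=main} gives $W_{n-3}(V,K)=0$, and since $\dim K = 2n-3$ places us in the divisorial setting, condition (a) of the lemma is satisfied. Hence condition (c) also holds, which is precisely the statement that $(K\otimes\Sym^qV)\cap\ker(\delta_{2,q})=0$ for all $q\le n-3$, yielding injectivity in this range.

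Finally, for the surjectivity half ($q\ge n-3$), Theorem~\ref{thm=main} gives $W_{n-3}(V,K)=0$, and since the graded module $W(V,K)$ is generated in degree $0$ we deduce $W_q(V,K)=0$ for all $q\ge n-3$. By definition \eqref{eqn:Wq_2} we have $W_q(V,K)=\Im(\delta_{2,q})/\Im(\delta_{2,q}|_{K\otimes S})$, so $W_q(V,K)=0$ is exactly the statement that $\delta_{2,q}|_{K\otimes\Sym^qV}$ surjects onto $\Im(\delta_{2,q})$. Combining both halves covers every $q$, giving maximal rank throughout; the case $q=n-3$ appears on both sides and simply gives the isomorphism in the divisorial case. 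There is no genuine obstacle here since the corollary is an immediate packaging of Theorem~\ref{thm=main} together with Lemma~\ref{lem:Schubert1}; the only care needed is to check the dimension count from \eqref{eqn:dimW} to correctly identify injective versus surjective in each range.
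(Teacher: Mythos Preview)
Your proof is correct and follows essentially the same approach as the paper: split into injectivity for $q\le n-3$ (via Theorem~\ref{thm=main} and Lemma~\ref{lem:Schubert1}(c)) and surjectivity for $q\ge n-3$ (via Theorem~\ref{thm=main} and generation in degree~$0$), with the dimension count \eqref{eqn:dimW} determining which case applies. The paper's version is a bit terser but the argument is identical.
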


\proof
Using (\ref{eqn:dimW}), it suffices to prove that (\ref{eq:presentation-Wq}) is injective for $q\leq n-3$ and surjective for $q\geq n-3$. Since $\mathcal{R}(V,K) = \{0\}$, we know from Theorem~\ref{thm=main} that $W_{n-3}(V,K)=0$, hence the assertion in Lemma~\ref{lem:Schubert1}(c) holds, proving the injectivity of (\ref{eq:presentation-Wq}) in the range $q\leq n-3$. The surjectivity of (\ref{eq:presentation-Wq}) is equivalent to the vanishing of $W_q(V,K)$, which is true for $q\geq n-3$ because $W_{n-3}(V,K)=0$ and $W(V,K)$ is generated in degree $0$.
\endproof

We now have all the tools necessary to prove Theorem~\ref{thm:HilbSeries}.

\begin{proof}[Proof of Theorem~\ref{thm:HilbSeries}]
We write $m=\dim(K)$ as usual, and suppose first that $m=2n-3$. By Corollary~\ref{cor:maximal-rank} we know that the maps (\ref{eq:presentation-Wq}) have maximal rank for all $q$. Since their cokernels are the graded components of $W(V,K)$, it follows from (\ref{eqn:dimW}) that
\[\dim\, W_q(V,K) = {n+q-1 \choose q} \frac{(n-2)(n-q-3)}{q+2}\mbox{ for all }q=0,\ldots,n-4.\]

We next consider the case $m>2n-3$. If we set $\GG :=\mathrm{Gr}_2(V^\vee )\subseteq \mathbb P(\bigwedge^2V^\vee )$ then~(\ref{eq:mainequiv}) implies that $\bb{P}(K^{\perp}) \cap \GG = \emptyset$. We consider the linear projection away from $\bb{P}(K^{\perp})$
\[ \pi \colon \mathbb P(\bigwedge^2V^\vee ) \dashrightarrow \bb{P}^{m-1}\]
Since $\dim(\GG)=2n-4$ it follows that $\pi(\GG)$ is a subvariety of $\bb{P}^{m-1}$ of codimension at least $m-1 - (2n-4) = m-2n+3$, hence a general linear subspace $\bb{P}^{m-2n+2} \subset \bb{P}^{m-1}$ is disjoint from $\pi(\GG)$, i.e. it satisfies $\pi^{-1}(\bb{P}^{m-2n+2}) \cap \GG = \emptyset$. Since $\pi^{-1}(\bb{P}^{m-2n+2})$ is a linear subspace of $\bb{P}(\bigwedge^2V^\vee )$ of codimension $2n-3$ containing $\bb{P}(K^{\perp})$, we can write $\pi^{-1}(\bb{P}^{m-2n+2})=\bb{P}(K'^\perp)$ for some $K'\subseteq K$ with $\dim(K')=2n-3$. It follows from \eqref{eq:wnat} (with the roles of $K$ and $K'$ exchanged) that $\dim\, W_q(V,K)\leq\dim\, W_q(V,K')$ for all $q$, which combined with the case $m=2n-3$ proves the desired inequality and completes our proof.
\end{proof}

\begin{rmk}
\label{rem:bigu}
A special case of Theorem~\ref{thm:HilbSeries} is obtained by taking $V = \Sym^{n-1}\bb{C}^2$ and $K=\Sym^{2n-4}\bb{C}^2$, in which case $W(V,K)$ is the \emph{Weyman module} $W(n-1)$ (see \cite[Section~5]{PS15} and \cite[Section~3.I.B]{E91}). It follows that Theorem~\ref{thm:HilbSeries} computes the Hilbert series of the said module, answering a problem left open in \cite[Section~1.2]{PS15}. For $n\le 8$, Alex Suciu used a computer search to construct subspaces $K_n \subseteq \bigwedge^2 \mathbb{C}^n$ with $m=2n-3$ and $\mathcal{R}(\mathbb{C}^n, K_n)= \{0\}$. He also computed the Hilbert series of the Koszul modules $W(\mathbb{C}^n, K_n)$. He conjectured the existence of a family of subspaces $K_n \subseteq \bigwedge^2 \mathbb{C}^n$ with $m=2n-3$ and $\mathcal{R}(\mathbb{C}^n, K_n)= \{0\}$, for all values of $n$, with Hilbert series given by the formula from Theorem~\ref{thm:HilbSeries}. Our results prove that the expectations were correct.
\end{rmk}

\section{Topological invariants of groups}
\label{subsec:top}

In this section we explain the topological motivation for investigating Koszul modules, and derive a number of consequences of our results to the study of invariants of fundamental groups. Some of our main applications concern groups $G$ that are $1$-formal (in the sense of Sullivan \cite{Su77}), where a direct relationship between certain invariants of $G$ and those of the corresponding Koszul modules can be derived. The class of $1$-formal groups contains important examples such as the fundamental groups of compact K\"ahler manifolds, hyperplane arrangement groups, or the Torelli group of the mapping class group of a genus $g$ surface.

\subsection{The Koszul module and the resonance variety of a group G}
\label{subsec:WG-RG}

We recall from Example~\ref{ex:W(G)} the construction of the \emph{Koszul module $W(G)$} associated to a finitely generated group $G$. The module $W(G):=W(V,K)$ is obtained by setting $V:= H_1(G, \mathbb{C})$, and letting $K:=\partial_G\left(H_2(G, \mathbb{C})\right)$, where $\partial_G$ is the dual of the cup product map
\[\cup_{G} \colon \bigwedge^2 H^1(G, \mathbb{C}) \to H^2(G, \mathbb{C}).\]
To the Koszul module $W(G)$ we can associate its \emph{resonance variety $\mc{R}(G)$} as in Definition~\ref{def:resonance}. Alternatively, we can define $\mc{R}(G)$ without reference to Koszul modules as follows. For an element $a\in H^1(G,\mathbb C)$, we consider the complex of abelian groups
$$(G,a):\qquad 0 \longrightarrow H^0(G,\mathbb C)\stackrel{\cup a}\longrightarrow H^1(G,\mathbb C)\stackrel{\cup a}\longrightarrow H^2(G,\mathbb C)\stackrel{\cup a} \longrightarrow \cdots.$$
The resonance variety $\mc{R}(G)$ is then obtained as
$$\cR(G)=\bigl\{a\in H^1(G,\mathbb C): H^1(G,a)\neq 0 \bigr\}.$$

In the $1$-formal context, $\mc{R}(G)$ has yet another interpretation given as follows.

\begin{rmk}\label{rem:jump-loci} For a character $\rho\in \widehat{G}:=\mbox{Hom}(G,\mathbb C^*)$, we denote by $\mathbb C_{\rho}$ the $\mathbb C[G]$-module $\mathbb C$ with multiplication given by
$g\cdot z:=\rho(g)z$. The resonance variety $\cR(G)$ turns out to be an infinitesimal analogue of the much studied \emph{characteristic variety}
$$\mathcal{V}(G):=\bigl\{\rho\in \widehat{G}: H^1(G,\mathbb C_{\rho})\neq 0\bigr\},$$
which already appeared in the work of Green--Lazarsfeld \cite{GL87}, Beauville \cite{Be}, or Libgober \cite{Li}. It is shown in \cite[Theorems A and C]{DPS} that when $G$ is $1$-formal, $\cR(G)$ is isomorphic to the tangent cone at $1$ of the characteristic variety~$\mathcal{V}(G)$.
\end{rmk}

Reinterpreted in the context of groups, our Theorems~\ref{thm=main} and~\ref{thm:HilbSeries} imply the following. We write $b_1(G) := \dim\, H_1(G,\bb{C})$ for the \emph{first Betti number} of the group $G$.

\begin{thm}\label{thm:WG-RG}
 Let $G$ be a finitely generated group and let $n=b_1(G)$. If $n\geq 3$ then we have the equivalence
 \[\mc{R}(G)=\{0\} \Longleftrightarrow W_{n-3}(G)=0.\]
 Moreover, if the equivalent statements above hold then
\[ \dim\, W_q(G) \leq {n+q-1 \choose q} \frac{(n-2)(n-q-3)}{q+2}\quad\mbox{ for }0\leq q\leq n-4.\]
\end{thm}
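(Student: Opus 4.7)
The plan is to derive Theorem~\ref{thm:WG-RG} by direct specialization of the Main Theorem (Theorem~\ref{thm=main}) and Theorem~\ref{thm:HilbSeries} to the setting of groups, with the only non-formal point being the verification that the two candidate definitions of the resonance variety coincide. By the construction recalled at the start of Section~\ref{subsec:WG-RG}, we have $W(G) = W(V,K)$ with $V := H_1(G,\mathbb{C})$ (so that $\dim V = b_1(G) = n$) and $K \subseteq \bw^2 V$ the subspace whose orthogonal $K^\perp \subseteq \bw^2 V^{\vee}$ equals $\ker(\cup_G)$. Granting the identification of resonance varieties described below, the equivalence $\mc{R}(G) = \{0\} \iff W_{n-3}(G) = 0$ becomes a direct instance of Theorem~\ref{thm=main}, and the upper bound on $\dim W_q(G)$ for $0 \leq q \leq n-4$ is a direct instance of Theorem~\ref{thm:HilbSeries}.

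The step that needs checking is that the cohomological resonance $\mc{R}(G) = \{a \in H^1(G,\mathbb{C}) : H^1(G,a) \neq 0\}$ coincides with the algebraic resonance $\mc{R}(V,K)$ from Definition~\ref{def:resonance}. For this, I would unpack the complex $(G,a)$: its initial differential $\cup a : H^0(G,\mathbb{C}) \to H^1(G,\mathbb{C})$ is given by $1 \mapsto a$, and its second differential $\cup a : H^1(G,\mathbb{C}) \to H^2(G,\mathbb{C})$ is given by $b \mapsto a \cup b$. Therefore $H^1(G,a) \neq 0$ precisely when there exists $b \in V^{\vee}$ with $a \cup b = 0$ and $b \notin \mathbb{C}\cdot a$; equivalently, $a \wedge b \in K^{\perp}\setminus\{0\}$. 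After adjoining $\{0\}$ to match the convention adopted in Definition~\ref{def:resonance}, this is exactly the description of $\mc{R}(V,K)$.

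The argument is thus essentially formal: once the dictionary between $H^1(G,\mathbb{C})$ and $V^{\vee}$ is in place and the cup product is identified with the natural pairing against $K$, both claims reduce to statements already proved in Section~\ref{sec:vanish}. The only bookkeeping step is keeping track of the dualities between $V$ and $V^{\vee}$ and the annihilation condition defining $K^{\perp}$; this is the mild technical point, but no new algebraic or topological input is required beyond the Main Theorem and Theorem~\ref{thm:HilbSeries}.
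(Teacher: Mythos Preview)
Your proposal is correct and follows exactly the paper's approach: the paper states this theorem as an immediate reinterpretation of Theorems~\ref{thm=main} and~\ref{thm:HilbSeries} in the group setting, with no further argument. Your additional verification that the cohomological description of $\mc{R}(G)$ via the complex $(G,a)$ agrees with $\mc{R}(V,K)$ from Definition~\ref{def:resonance} is not strictly needed for the theorem as stated---the paper takes $\mc{R}(G)$ to be $\mc{R}(V,K)$ by definition and offers the cohomological description only as an alternative---but it is a harmless and useful sanity check.
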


\vskip 3pt

This theorem can be used to obtain upper bounds on the Chen ranks of a group $G$ with vanishing resonance, once we establish a relationship between the said ranks and the Hilbert function of $W(G)$. This is achieved via the study of the Alexander invariant of $G$ in the next section.

\subsection{Alexander invariants.}
\label{subsec:alexander}
Alexander-type invariants have a long history in topology, starting with the definition of the Alexander polynomial of knots and links \cite{Alex}.
For a connected CW--complex $X$ with fundamental group $G= \pi_1(X)$, each surjective group homomorphism $\alpha\colon G \to H$ gives rise to a sequence of
Alexander-type invariants of $X$, namely the twisted homology groups $H_i(X, L)$, where $L$ is the local system $\mathbb{C}[H]$.
By construction, these invariants are $\mathbb{C}[G]$--modules. When $i=1$ the construction depends solely
on $G$, is purely algebraic and can be carried out for any group. The classical Alexander invariant of a knot $Y\subseteq S^3$ corresponds to the case
when $X:=S^3\setminus Y$ is the knot complement, $i=1$ and  $\alpha:G \to \mathbb{Z}$ is the abelianization map.

\vskip 3pt

\begin{defn}\label{alexinvarians}
If $G$ is a finitely generated group, its \emph{Alexander invariant} is defined as $B(G):=H_1(G',\mathbb Z)=G'/G''$, viewed as a module over the group ring
$\mathbb Z[G_{\mathrm{ab}}]$, where $G_{\mathrm{ab}}:=G/G'$, the action being given by conjugation. We write $B(G)_{\mathbb C}:=B(G)\otimes_{\mathbb Z} \mathbb C$.
\end{defn}

Geometrically, if $X$ is a CW--complex with $\pi_1(X)\cong G$ and $X_{\mathrm{ab}} \rightarrow X$ is the maximal abelian cover (whose group of deck transformations is $G_{\mathrm{ab}}=H_1(X,\mathbb Z)$),
then $B(G)$ can be identified with $H_1(X_{\mathrm{ab}},\mathbb Z)$, viewed as a module over $G_{\mathrm{ab}}$.

\vskip 3pt

The Alexander invariant is equipped with the $I$--adic filtration, where
$I:=\mbox{Ker}(\epsilon)$ is the kernel of the augmentation map $\epsilon\colon \mathbb Z[G/G'] \rightarrow \mathbb Z$.
A key result of Massey \cite{Massey} asserts that for each $q\geq 0$, one has an isomorphism
\begin{equation}\label{eq:Massey}
\mathrm{gr}_q B(G):=\mathrm{gr}_q^{I} B(G)\cong \mathrm{gr}_{q+2} (G/G'').
\end{equation}
In particular, the generating series of the Chen ranks of $G$ is given as a Hilbert series $$\sum_{q=0}^{\infty} \theta_{q+2}(G)t^q=\mbox{Hilb}\bigl(\mbox{gr } B(G)_{\mathbb C},t\bigr).$$

\vskip 4pt

\begin{rmk}
The Chen ranks  are usually more easily computable than the largely inaccessible lower central ranks of a group $G$. An instance of this is the case of hyperplane
arrangement groups associated to an arrangement $\mathcal{A}=\{H_1, \ldots, H_{n}\}$ of hyperplanes in $\mathbb C^m$. Whereas the fundamental group $G(\mathcal{A}):=\pi_1\bigl(\mathbb C^m-\cup_{H\in \cA} H\bigr)$ of the arrangement is \emph{not} determined by the intersection lattice of $\mathcal{A}$, the lower central series and the corresponding ranks $\phi_q(G(\mathcal{A}))$ and $\theta_q\bigl(G(\mathcal{A}))$  are, and in particular they are combinatorial objects. An explicit formula for $\phi_q(G(\cA))$ remains elusive, but the Chen ranks are subject to a conjecture of Suciu, proven in many cases in \cite{ScSu} and in full generality in \cite{CSch}. It asserts that for $q\gg 0$, one has
the formula
$$\theta_q\bigl(G(\cA)\bigr)=\sum_{r\geq 1} h_r \theta_q(F_{r+1}),$$
where $h_r$ is the number of $r$-dimensional components of the resonance variety $\mathcal{R}(G(\cA))$ and the Chen ranks of the free group are given by (see \cite{Chen})
\[\theta_1(F_{r+1})=r+1\quad\mbox{ and }\quad\theta_q(F_{r+1})=(q-1){q+r-1\choose q}\mbox{ for }q\geq 2.\]
\end{rmk}

\vskip 4pt

The Alexander invariants of $G$ exhibit the same type of behaviour as their cousins, the topological jump loci $\mathcal{V}(X):=\mathcal{V}(\pi_1(X))$ from Remark~\ref{rem:jump-loci}. They are complicated objects encoding useful information, with complexity bounded by invariants depending on the cohomology ring. The following proposition, included to make the paper more self-contained,  summarizes work from \cite{DHP, Massey, PS-imrn, PS-johnson}.

\begin{proposition}\label{comparison}
For each finitely generated group $G$ one has the inequality 
\begin{equation}
\label{eq=bw}
\mbox{dim } \mathrm{gr}_q B(G)_{\mathbb C} \leq \mbox{dim } W_q(G),
\end{equation}
with equality when the group $G$ is $1$--formal.
\end{proposition}
\begin{proof}
Denoting by $\partial_G\colon  H_2(G, \mathbb C)\rightarrow  \bigwedge^2 H_1(G,\mathbb C)$ the dual of the cup product map $\cup_G$, one defines following Chen \cite{Chen} the \emph{holonomy Lie algebra} $$\mathfrak{H}(G):=\mathbb L\bigl(H_1(G,\mathbb C)\bigr)/\mbox{Ideal}\bigl(\mbox{Im}(\partial_G)\bigr),$$
where $\mathbb L\bigl(H_1(G,\mathbb C)\bigr)$ is the free Lie algebra over $\bb{C}$ generated by $H_1(G,\mathbb C)$.
Note that $\mathbb L^1\bigl(H_1(G,\mathbb C)\bigr)=H_1(G, \mathbb C)$ and $\mathbb L^2\bigl(H_1(G,\mathbb C)\bigr)=\bigwedge^2 H_1(G, \mathbb C)$.

\vskip 3pt

Following the discussion in \cite[Section~5.4]{PS-johnson}, by the universal property of the free Lie algebra, there exists a natural surjection
$\mathbb L\bigl(H_1(G, \mathbb C)\bigr)\onto \op{gr}(G)  \oo \bb{C}$ compatible with taking derived quotients, and thus inducing a surjection
\begin{equation}\label{eq:hh''-to-GG''}
 \mathfrak{H}(G)/\mathfrak{H}''(G) \onto \op{gr}(G/G'') \oo \bb{C}.
\end{equation}
We note that the above surjection is the one given in \cite[(5.8)]{PS-johnson},  obtained as a consequence of \cite[(5.7)]{PS-johnson}, which in turn follows from \cite[Proposition~3.3]{MaP} (by taking $A^* = H^*(G)$ to be the cohomology ring of $G$, and letting $S$ be an Eilenberg--MacLane space $K(G,1)$).

Both the source and the target of \eqref{eq:hh''-to-GG''} are Lie algebras generated in degree one: for any such Lie algebra
$\mf{g}=\mf{g}_1 \oplus \mf{g}_2 \oplus \cdots $ 
we have that
\[ \mf{g}' = \mf{g}_{\geq 2} = \mf{g}_2 \oplus \mf{g}_3 \oplus \cdots\]
In particular, we get from \eqref{eq:hh''-to-GG''} a surjection
\begin{equation}\label{eq:surj-deg-2}
\mathfrak{H}'(G)/\mathfrak{H}''(G) \onto \op{gr}_{\geq 2}(G/G'') \oo \bb{C}.
\end{equation}
It is shown in \cite[Theorem~6.2]{PS-imrn} that $\mathfrak{H}'(G)/\mathfrak{H}''(G)$ has the following presentation as a module over $S:=\mbox{Sym } H_1(G,\mathbb C)$
\[ \bigl(\bw^3 V \oplus K\bigr) \oo S \lra \bw^2 V \oo S \lra \mathfrak{H}'(G)/\mathfrak{H}''(G) \lra 0,\]
where $K:=\mbox{Im}(\partial_G)$ and $\bw^2 V$ are placed in degree $2$, and $\bw^3 V$ is placed in degree $3$. Comparing this to the definition (\ref{eq:def-Koszul-module}) of the Koszul module $W(G)$ which has an equivalent presentation up to a shift in degree by two, we conclude that
\[ W_q(G) \cong \op{gr}_{q+2}\left(\mathfrak{H}'(G)/\mathfrak{H}''(G)\right)\text{ for }q\geq 0.\]
Combining this with \eqref{eq:surj-deg-2} and \eqref{eq:Massey}, we obtain that $\mbox{dim } \mbox{gr}_{q+2}(G/G'')\leq \mbox{dim } W_q(G)$. The fact that equality holds in the $1$-formal case is the content of \cite[Theorem~5.6]{DPS}.
\end{proof}

\vskip 4pt

In the following applications we relate the $I$--adic filtrations of the Alexander invariant $B(G)$  of $G$
to the cohomology ring of $G$ in low degrees in a more precise way.

\begin{thm}
\label{thm=maingroups}
Let $G$ be a finitely generated group with $b_1(G)\geq 3$. If $\mathcal{R} (G)=\{0\}$, then
\[\bigl(I^q \cdot B(G)\bigr)_{\bb{C}}=\bigl(I^{q+1} \cdot B(G)\bigr)_{\bb{C}}\]
for all $q\ge b_1(G)-3$.
\end{thm}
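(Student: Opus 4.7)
The plan is to observe that this statement is essentially a direct translation of the Main Theorem into the language of Alexander invariants, using the bridge provided by the inequality (\ref{eq=bw}). Writing $n=b_1(G)$, I would first note that the desired equality $(I^q\cdot B(G))_{\bb{C}} = (I^{q+1}\cdot B(G))_{\bb{C}}$ is equivalent, by flatness of $-\oo_{\bb{Z}}\bb{C}$, to the vanishing $\mathrm{gr}_q B(G)_{\bb{C}} = 0$, since by definition $\mathrm{gr}_q B(G) = I^q\cdot B(G)/I^{q+1}\cdot B(G)$.

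Second, I would invoke the surjection $W(G) \twoheadrightarrow \mathrm{gr}\,B(G)_{\bb{C}}$ recalled in Section~\ref{subsec:alexander}, which gives the dimension inequality (\ref{eq=bw}):
\[
\dim\,\mathrm{gr}_q B(G)_{\bb{C}} \leq \dim\,W_q(G).
\]
Thus it suffices to show $W_q(G)=0$ for all $q\geq n-3$.

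Third, I would apply Theorem~\ref{thm:WG-RG}: since $\mc{R}(G)=\{0\}$ and $n\geq 3$, we get $W_{n-3}(G)=0$. Because $W(G)$ is generated in degree $0$ as a graded $S$-module (as noted right after the definition of Koszul modules), the vanishing propagates, giving $W_q(G)=0$ for every $q\geq n-3$. Combining with the previous step finishes the proof.

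There is essentially no obstacle at this stage — the serious technical content has already been absorbed into Theorem~\ref{thm:WG-RG} (and hence into the Main Theorem via the proof in Section~\ref{subsec:bordervanish}), and into the comparison (\ref{eq=bw}) between the Alexander grading and the Koszul module, which is attributed to the works \cite{DHP, Massey, PS-imrn, PS-johnson, Sulli}. The statement is a clean corollary of these two ingredients.
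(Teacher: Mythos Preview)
Your proposal is correct and follows exactly the approach of the paper, which simply says ``Combine Theorem~\ref{thm:WG-RG} with \eqref{eq=bw}.'' You have just unpacked this one-line proof into its constituent steps.
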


\proof
Combine Theorem \ref{thm:WG-RG} with \eqref{eq=bw}.
\endproof

We recall that a module $M$ over a group ring is nilpotent if $I^q \cdot M=0$ for some $q$, where $I$ is the augmentation ideal.

\begin{cor}
\label{cor=nilpmod}
If $\mathcal{R} (G)=\{0\}$ and $B(G)$ is nilpotent, then $\bigl(I^{b_1(G)-3} \cdot B(G)\bigr)_{\bb{C}}=0$. If moreover $G$ is $1$--formal, then
$\dim \, B(G)_{\mathbb C}= \dim \, W(G)< \infty$.
\end{cor}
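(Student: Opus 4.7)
The plan is to deduce both assertions directly from Theorem~\ref{thm=maingroups} and Theorem~\ref{thm:WG-RG} together with the inequality (\ref{eq=bw}), without reopening the Koszul module machinery.

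For the first assertion, set $n=b_1(G)$. Theorem~\ref{thm=maingroups} gives the equalities
\[
\bigl(I^{n-3}\cdot B(G)\bigr)_{\bb{C}}=\bigl(I^{n-2}\cdot B(G)\bigr)_{\bb{C}}=\bigl(I^{n-1}\cdot B(G)\bigr)_{\bb{C}}=\cdots,
\]
so the chain of subspaces $\bigl(I^q\cdot B(G)\bigr)_{\bb{C}}$ stabilizes from degree $q=n-3$ onwards. Since $B(G)$ is nilpotent, $I^q\cdot B(G)=0$ for $q\gg 0$, and extending scalars yields $\bigl(I^q\cdot B(G)\bigr)_{\bb{C}}=0$ for $q\gg 0$. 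Comparing with the stabilization forces $\bigl(I^{n-3}\cdot B(G)\bigr)_{\bb{C}}=0$, as required.

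For the second assertion, the first part produces a finite descending filtration
\[
B(G)_{\bb{C}}\supseteq \bigl(I\cdot B(G)\bigr)_{\bb{C}}\supseteq\cdots\supseteq\bigl(I^{n-4}\cdot B(G)\bigr)_{\bb{C}}\supseteq\bigl(I^{n-3}\cdot B(G)\bigr)_{\bb{C}}=0,
\]
so $B(G)_{\bb{C}}$ is a complex vector space whose dimension equals the sum of the dimensions of the successive quotients $\mathrm{gr}_q B(G)_{\bb{C}}$ for $q=0,\ldots,n-4$. Under the $1$-formality hypothesis, (\ref{eq=bw}) becomes an equality, so $\dim\,\mathrm{gr}_q B(G)_{\bb{C}}=\dim\,W_q(G)$ for every $q$. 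On the other hand, Theorem~\ref{thm:WG-RG} applied with $\mc{R}(G)=\{0\}$ yields $W_q(G)=0$ for $q\geq n-3$, and each $W_q(G)$ is finite-dimensional (being a quotient of $\bigwedge^2 V\otimes\Sym^q V$). Summing over $q$,
\[
\dim\,B(G)_{\bb{C}}=\sum_{q=0}^{n-4}\dim\,\mathrm{gr}_q B(G)_{\bb{C}}=\sum_{q=0}^{n-4}\dim\,W_q(G)=\dim\,W(G)<\infty.
\]

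The only delicate point is the passage from ``graded dimension'' to the total dimension of $B(G)_{\bb{C}}$, which requires the $I$-adic filtration to be both finite and separated on $B(G)_{\bb{C}}$; this is precisely what the first assertion provides. Everything else is a straightforward assembly of the previously established results, so I do not foresee a substantial obstacle.
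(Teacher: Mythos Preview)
Your argument is correct and matches the paper's intended approach: the corollary is stated without proof because it is meant to follow immediately from Theorem~\ref{thm=maingroups} (stabilization of the $I$-adic filtration) combined with the nilpotence of $B(G)$ for the first claim, and from the equality case of \eqref{eq=bw} together with Theorem~\ref{thm:WG-RG} for the second. Your explicit justification that the finite separated filtration on $B(G)_{\bb{C}}$ allows summing the graded pieces, and that these coincide with the $W_q(G)$ under $1$-formality, is exactly the unpacking the paper leaves to the reader.
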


%In particular, Corollary \ref{cor=nilpmod}  implies Theorem \ref{thmA} stated in the Introduction.
%\vskip 3pt

In general, the Alexander invariant of a group may not be nilpotent as a module, but this is the case when the group itself is nilpotent. Recall that a group $G$ is said to be nilpotent if there exists $c\geq 1$ with $\Gamma_c(G)=\{1\}$. The largest $c$ such that $\Gamma_c(G)\neq \{1\}$ is called the \emph{nilpotency class} of $G$, and is denoted $\nc(G)$. A group $G$ is \emph{virtually nilpotent} if there exists a finite index nilpotent subgroup $H\leq G$. We define the \emph{virtual nilpotency class} $\vnc(G)$ of such a group $G$ to be the following quantity
\begin{equation}\label{eq:def-vnc}
\vnc(G) := \min\bigl\{\nc(H):H\leq G\mbox{ is a nilpotent finite index subgroup}\bigr\}.
\end{equation}
Since every finitely generated virtually nilpotent group $G$ contains a torsion free finite index subgroup, and since the nilpotency class of a torsion free nilpotent group equals that of any of its finite index subgroups, it follows that 
\begin{equation}\label{eq:vncG=ncH}
\vnc(G) = \nc(H)\mbox{ for any torsion free nilpotent subgroup }H\leq G\mbox{ of finite index}.
\end{equation}
One useful bound for the virtual nilpotency class of $G$ is obtained as follows.

\begin{lemma}\label{lem:vnc}
If $G$ contains a torsion-free finite index subgroup $H$ and $\Gamma_c(G)$ is finite, then $\vnc(G)=\nc(H)\leq c-1$.
\end{lemma}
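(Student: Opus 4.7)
The plan is to show that $H$ itself is nilpotent of class at most $c-1$, and then invoke equation~(\ref{eq:vncG=ncH}) to transfer this information to $\vnc(G)$.

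First I would observe that since $H$ is a subgroup of $G$, the functoriality of the lower central series yields the containment $\Gamma_c(H)\subseteq\Gamma_c(G)$ (this follows inductively from $[\Gamma_{q}(H),H]\subseteq[\Gamma_q(G),G]$). By hypothesis $\Gamma_c(G)$ is finite, so $\Gamma_c(H)$ is finite as well. On the other hand, $H$ is torsion-free, hence every subgroup of $H$ is torsion-free; in particular $\Gamma_c(H)$ is a torsion-free finite group, which forces $\Gamma_c(H)=\{1\}$. This shows that $H$ is nilpotent with $\nc(H)\leq c-1$.

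Having shown that $H$ is a torsion-free nilpotent finite index subgroup of $G$, the group $G$ is virtually nilpotent and the quantity $\vnc(G)$ is well-defined as in~(\ref{eq:def-vnc}). Equation~(\ref{eq:vncG=ncH}) then immediately gives $\vnc(G)=\nc(H)$, and combining with the previous step we conclude $\vnc(G)=\nc(H)\leq c-1$, as desired.

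There is no real obstacle here: the argument uses only that $\Gamma_\bullet$ is monotone in the subgroup, that torsion-freeness passes to subgroups, and the already-recorded identity~(\ref{eq:vncG=ncH}). The one point worth being careful about is the direction of the inequality — the hypothesis that $\Gamma_c(G)$ is finite is \emph{weaker} than $\Gamma_c(G)=\{1\}$, yet the torsion-free conclusion on $H$ upgrades it to the stronger statement $\Gamma_c(H)=\{1\}$, which is precisely what makes the bound $\nc(H)\leq c-1$ (rather than just $\nc(H)\leq c$) available.
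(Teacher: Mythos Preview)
Your argument is correct and matches the paper's proof essentially verbatim: the paper also uses $\Gamma_c(H)\leq\Gamma_c(G)$ together with torsion-freeness of $\Gamma_c(H)$ to conclude $\Gamma_c(H)=\{1\}$ and hence $\nc(H)\leq c-1$, then appeals to~(\ref{eq:vncG=ncH}) for the equality $\vnc(G)=\nc(H)$.
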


\begin{proof}
Since $\Gamma_c(H)\leq\Gamma_c(G)$ and $\Gamma_c(H)$ is torsion-free, it follows that $\Gamma_c(H)=\{1\}$ and therefore $\nc(H)\leq c-1$. The equality $\vnc(G)=\nc(H)$ follows from (\ref{eq:vncG=ncH}).
\end{proof}

\begin{cor}
\label{cor=nilpgr}
Let $G$ be a finitely generated nilpotent group with vanishing resonance. Then $\bigl(I^{b_1(G)-3} \cdot B(G)\bigr)_{\bb{C}}=0$. It also follows that $\vnc(G/G'')\leq b_1(G)-2$.
\end{cor}

\begin{proof}
 Since $G$ is nilpotent, its Alexander invariant is also nilpotent so the conclusion $\bigl(I^{b_1(G)-3} \cdot B(G)\bigr)_{\bb{C}}=0$ follows from Corollary~\ref{cor=nilpmod}. It follows moreover from (\ref{eq:Massey}) and \eqref{eq=bw} that $\theta_q(G)=0$ for $q\geq b_1(G)-1$. Since $G/G''$ is nilpotent, we must have that $\Gamma_q(G/G'')$ is finite for $q\geq b_1(G)-1$. Furthermore, since $G$ is finitely generated, it contains a torsion-free finite index subgroup, so we can apply Lemma \ref{lem:vnc} to conclude that $\vnc(G/G'')\leq b_1(G)-2$.
\end{proof}

The conclusion of Corollary~\ref{cor=nilpgr} \emph{does} require the vanishing resonance assumption.

\begin{ex}
\label{ex=freenilp}
Indeed, let $F_n$ be the free group on $n\ge 2$ generators, and consider the nilpotent group $G:= F_n/\Gamma_k(F_n)$, where $k\geq 3$.
As explained for instance in \cite[Remark~2.4]{MP}, the resonance variety of a group depends only on its third nilpotent quotient.
Since $G/\Gamma_3(G)\cong F_n/\Gamma_3(F_n)$, it follows that $\mathcal{R} (G)\cong \mathcal{R} (F_n)\cong \mathbb{C}^n$. Note also that $b_1(G)=n$.
On the other hand, as already pointed out, the Hilbert series
$\sum_{q\ge 1} \theta_q(F_n) t^q$ was computed by Chen \cite{Chen} and {\em all} of its coefficients are strictly positive. Since $\theta_q(F_n)=\theta_q(G)$ for $q<k$, it follows from (\ref{eq:Massey}) that $I^q\cdot B(G)\neq 0$ for $q<k-2$, thus the nilpotency class cannot be bounded solely in terms of $n=b_1(G)$.
\end{ex}

In fact the resonance variety of a nilpotent group of nilpotency class $2$ can be arbitrarily complicated, cf. \cite[Remark 2.4]{MP}. However, this phenomenon does not occur if we assume $1$-formality. The resonance variety of a finitely generated, nilpotent, $1$-formal group $G$ must vanish,  see \cite[Lemma 2.4]{CT} and the proof of Theorem~\ref{thm:bound-chen} below. %Theorem~\ref{thm:Kahler} in the Introduction is the special case when $G$ is a K\"ahler group of the following.

\begin{thm}\label{thm:bound-chen}
 Let $G$ be a finitely generated $1$-formal group and suppose its first Betti number is $n=b_1(G)\geq 3$. If $G/G''$ is nilpotent, then $\theta_q(G)=0$ for $q\geq n-1$ and
 \[\theta_q(G) \leq {n+q-3\choose n-1} \cdot \frac{(n-2)(n-1-q)}{q}\quad\mbox{ for }q=2,\ldots,n-2.\]
\end{thm}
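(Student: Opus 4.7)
The plan is to reduce the statement to our two main algebraic results about Koszul modules (Theorem~\ref{thm=main} and Theorem~\ref{thm:HilbSeries}, as packaged in Theorem~\ref{thm:WG-RG}), with the bridge between topology and algebra provided by $1$-formality via equality in \eqref{eq=bw} together with Massey's isomorphism \eqref{eq:Massey}. The proof therefore has three essentially mechanical steps: (i) show that the nilpotence of $M:=G/G''$ forces $\mathcal{R}(G)=\{0\}$; (ii) feed this into Theorem~\ref{thm:WG-RG} to obtain both the vanishing and the Hilbert function bound for $W(G)$; (iii) translate back into an inequality for the Chen ranks $\theta_q(G)$.

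For step (i), I will use that $M$ is a finitely generated nilpotent group, so $\mathrm{gr}_p(M)$ has finite rank for every $p$ and vanishes for all sufficiently large $p$. Massey's isomorphism \eqref{eq:Massey} then yields $\mathrm{gr}_q B(G)_{\bb{C}}=0$ for $q\gg 0$, and because $G$ is $1$-formal, the equality case of \eqref{eq=bw} promotes this to $W_q(G)=0$ for $q\gg 0$. Since $W(G)$ is generated in degree $0$, this means $W(G)$ is of finite length, and the equivalence \eqref{eq:mainequiv} gives $\mathcal{R}(G)=\{0\}$. This is conceptually the main step, and the only place where both hypotheses ($1$-formality and nilpotence of $G/G''$) are simultaneously invoked.

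For step (ii), once $\mathcal{R}(G)=\{0\}$ and $n=b_1(G)\geq 3$, Theorem~\ref{thm:WG-RG} produces the vanishing $W_q(G)=0$ for all $q\geq n-3$ as well as the estimate
\[
\dim\, W_q(G) \leq \binom{n+q-1}{q}\,\frac{(n-2)(n-q-3)}{q+2}\quad\text{for }0\leq q\leq n-4.
\]

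For step (iii), I will invoke the equality case of \eqref{eq=bw} together with \eqref{eq:Massey} to get $\theta_{q+2}(G)=\dim\, W_q(G)$ for all $q\geq 0$. The vanishing for $q\geq n-3$ then translates, after the substitution $p=q+2$, into $\theta_p(G)=0$ for $p\geq n-1$. The quantitative bound, under the same substitution, becomes
\[
\theta_p(G)\leq \binom{n+p-3}{p-2}\,\frac{(n-2)(n-1-p)}{p} \;=\; \binom{n+p-3}{n-1}\,\frac{(n-2)(n-1-p)}{p},
\]
valid for $2\leq p\leq n-2$, exactly matching the claimed estimate. All the difficulty has been isolated into step (i); the remaining passages are bookkeeping, modulo a reindexing identity $\binom{n+p-3}{p-2}=\binom{n+p-3}{n-1}$ that needs checking in the final line.
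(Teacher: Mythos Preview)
Your proof is correct and follows essentially the same route as the paper: use nilpotence of $G/G''$ together with Massey's isomorphism and the $1$-formal equality in \eqref{eq=bw} to force $W(G)$ finite length (hence $\mathcal{R}(G)=\{0\}$), then invoke Theorem~\ref{thm:WG-RG} and reindex. The only addition is your explicit verification of the binomial identity $\binom{n+p-3}{p-2}=\binom{n+p-3}{n-1}$, which the paper leaves implicit.
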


\begin{proof} The condition that $G/G''$ is nilpotent implies that $\theta_q(G)=\phi_q(G/G'')=0$ for $q\gg 0$. Since $G$ is $1$-formal, we have using (\ref{eq:Massey}) and \eqref{eq=bw} that $\dim\,W_q(G)=\theta_{q+2}(G)$ for all $q\geq 0$. In particular $W(G)$ is a finite dimensional module, hence $\mc{R}(G)=\{0\}$ and Theorem~\ref{thm:WG-RG} applies to give the desired conclusions about $\theta_q(G)=\dim\,W_{q-2}(G)$.
\end{proof}

We proved in Theorem \ref{thm=main} that $W_{n-3}(V,K)=0$ when $\mathcal{R} (V,K)=\{ 0\}$, and therefore $W_q(V,K)=0$ for all $q\ge n-3$.
In the borderline case $m=2n-3$, we infer from Proposition \ref{prop:nonzero} that this is the best possible vanishing result
for Koszul modules. In the general case, the following examples coming from nilpotent groups show that the bound $q(n,m)= n-3$ may be improved, at least in some cases.

\begin{ex}
\label{ex=heis}
For $k\ge 1$, we denote by $H_k$ the fundamental group of the {\em Heisenberg nilmanifold} of dimension $2k+1$.
It is well-known that $H_k$ is finitely generated and $\Gamma_3(H_k)= \{ 1\}$. We set
$V:= H_1(H_k, \mathbb{C})$ and denote by $K^{\perp}\subseteq \bigwedge^2 V^{\vee}$ the kernel of the cup product map as before. If $k\ge 2$,
we know from \cite{Mac} that $H_k$ is $1$--formal and $\mathcal{R}(H_k)=\{ 0\}$. In this case, $b_1(H_k)=2k$ and $K\subseteq \bigwedge^2 V$
has codimension $1$. The fact that $\Gamma_3(H_k)= \{ 1\}$ and \eqref{eq=bw} together imply that $W_1(H_k)=0$, but note that $1<b_1(H_k)-3$ as soon as $k>2$.
\end{ex}

\subsection{A bound on the degree of growth of a group.}\label{subsec:deg-growth}

The goal of this section is to provide a proof of Theorem~\ref{thm:growth-degree}. Suppose $G$ is a finitely generated group and let $S\subseteq G$ be a finite set of generators. For an integer $m\geq 0$, we denote by $\ell_G(m)=\ell_{G,S}(m)$ the number of elements $g\in G$ expressible as a product of $m$ elements from $S\cup S^{-1}$. Bass \cite{bass} and Guivarc'h \cite{guivarch} showed that a finitely generated nilpotent group $G$ has polynomial growth. More precisely, the inequality $\ell_G(m)\leq C m^{d(G)}$
holds for $m\gg 0$, where $d(G)$ is the \emph{degree of polynomial growth} of the group. In the case of a nilpotent group $N$, this invariant is given by the  formula

\begin{equation}\label{thm:bass-guivarch}
d(N) = \sum_{q\geq 1} q\cdot\phi_q(N),
\end{equation}
where $\phi_q(N)$ denote the lower central ranks of $N$.
We also recall a celebrated theorem of Gromov \cite{gromov} which asserts that conversely, the virtually nilpotent groups are precisely the ones that have polynomial growth. If $G$ is merely virtually nilpotent, the Bass-Guivarc'h formula (\ref{thm:bass-guivarch}) fails as seen for instance by taking $G$ to be the infinite dihedral group, where $d(G)=1$, whereas $\phi_q(G)=0$ for all $q$. For a quantitative version of Gromov's result, we refer to \cite[Theorem 4]{M}.

\vskip 3pt

Before stating the following result, we recall that the \emph{Hirsch index} of a finitely generated nilpotent (or more generally polycyclic) group $G$ is defined to be the quantity
$h(G):=\sum_{q\geq 1} \phi_q(G)$, see \cite[Chapter~1]{segal}. It is well-known that for a normal subgroup $N\trianglelefteq G$ one has the additivity property $h(G)=h(N)+h(G/N)$.

\begin{lem}\label{lem:ineq-sumphi}
 Let $G$ be a finitely generated group, and $H$ be a finite index subgroup of~$G$. For every $k\geq 1$ we have that
 \begin{equation}\label{eq:sum-phi-ineq}
 \sum_{q=1}^k \phi_q(H) \geq \sum_{q=1}^k \phi_q(G).
 \end{equation}
 Moreover, if $[H\cap\Gamma_{k+1}(G):\Gamma_{k+1}(H)]<\infty$, then the above inequality is in fact an equality.
\end{lem}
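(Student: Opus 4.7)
The plan is to translate both sides of \eqref{eq:sum-phi-ineq} into Hirsch lengths of finitely generated nilpotent quotients and to exploit the additivity of the Hirsch length along short exact sequences of polycyclic groups. Concretely, writing $G_k := G/\Gamma_{k+1}(G)$ and $H_k := H/\Gamma_{k+1}(H)$, the definition of $\phi_q$ together with the additivity of the Hirsch index on the successive quotients of the lower central series yields
\[
h(G_k) = \sum_{q=1}^{k}\phi_q(G) \quad\text{and}\quad h(H_k) = \sum_{q=1}^{k}\phi_q(H).
\]

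The first step is to relate $\Gamma_{k+1}(H)$ to $\Gamma_{k+1}(G)$. Functoriality of the lower central series under the inclusion $H\hookrightarrow G$ gives $\Gamma_{k+1}(H)\leq H\cap\Gamma_{k+1}(G)$, so there is a canonical surjection
\[
\pi: H_k = H/\Gamma_{k+1}(H) \twoheadrightarrow H/(H\cap\Gamma_{k+1}(G)).
\]
Since Hirsch length does not increase under quotients (additivity plus non-negativity), we get
\[
h(H_k) \geq h\bigl(H/(H\cap\Gamma_{k+1}(G))\bigr).
\]
The second step identifies the right-hand side with $h(G_k)$. The natural inclusion $H/(H\cap\Gamma_{k+1}(G)) \cong H\Gamma_{k+1}(G)/\Gamma_{k+1}(G) \leq G_k$ realizes the former as a finite index subgroup of $G_k$, because $[G:H]<\infty$ forces $[G_k : H\Gamma_{k+1}(G)/\Gamma_{k+1}(G)]<\infty$. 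Finite index subgroups of finitely generated nilpotent groups have the same Hirsch length, so
\[
h\bigl(H/(H\cap\Gamma_{k+1}(G))\bigr) = h(G_k) = \sum_{q=1}^{k}\phi_q(G),
\]
and the inequality \eqref{eq:sum-phi-ineq} follows.

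For the equality statement, apply additivity of the Hirsch length to the short exact sequence
\[
1 \lra (H\cap\Gamma_{k+1}(G))/\Gamma_{k+1}(H) \lra H_k \overset{\pi}{\lra} H/(H\cap\Gamma_{k+1}(G)) \lra 1,
\]
which gives
\[
h(H_k) - h(G_k) = h\bigl((H\cap\Gamma_{k+1}(G))/\Gamma_{k+1}(H)\bigr).
\]
The kernel $(H\cap\Gamma_{k+1}(G))/\Gamma_{k+1}(H)$ is a finitely generated nilpotent group (being a subgroup of the finitely generated nilpotent group $H_k$), and a finitely generated nilpotent group has Hirsch length zero if and only if it is finite. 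Thus \eqref{eq:sum-phi-ineq} is an equality exactly when $[H\cap\Gamma_{k+1}(G) : \Gamma_{k+1}(H)]<\infty$, as claimed.

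The main obstacle I anticipate is purely bookkeeping: ensuring that the subquotient $(H\cap\Gamma_{k+1}(G))/\Gamma_{k+1}(H)$ is actually finitely generated (so that its Hirsch length is defined and the ``finite iff $h=0$'' criterion applies). This is immediate once one observes it sits inside $H_k$, which is finitely generated nilpotent and hence polycyclic, so all of its subgroups are finitely generated.
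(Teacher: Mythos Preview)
Your proof is correct and follows essentially the same route as the paper's: both arguments compare the Hirsch lengths of $H/\Gamma_{k+1}(H)$ and $G/\Gamma_{k+1}(G)$ via the natural map between them, using that its image has finite index (hence equal Hirsch length) in the latter and that the kernel is $(H\cap\Gamma_{k+1}(G))/\Gamma_{k+1}(H)$. Your treatment is slightly more explicit in identifying the image via the second isomorphism theorem and in justifying that the kernel is finitely generated, but the underlying argument is the same.
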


\begin{proof} The quotient $\ol{H}:=H/\Gamma_{k+1}(H)$ is nilpotent and its Hirsch number is given by
\[h(\ol{H}) = \sum_{q=1}^k \phi_q(H).\]
A similar formula holds for $\ol{G}:=G/\Gamma_{k+1}(G)$. Since $H\leq G$, we must also have that $\Gamma_{k+1}(H)\leq\Gamma_{k+1}(G)$, and therefore we obtain an induced group homomorphism
\[ \psi : \ol{H} \lra \ol{G}.\]
Since $[G:H]<\infty$,  the image of $\psi$ has finite index in $\ol{G}$, therefore $h(\op{Im}\psi) = h(\ol{G})$. Since $\op{Im}\psi$ is a quotient of $\ol{H}$, it follows that $h(\ol{H})=h(\ker\psi) + h(\op{Im}\psi)\geq h(\op{Im}\psi)$, which proves that $h(\ol{H})\geq h(\ol{G})$. If we assume that $\Gamma_{k+1}(H)$ has finite index in $H\cap\Gamma_{k+1}(G)$ then $\ker\psi$ is a finite group and therefore $h(\ker\psi)=0$, which implies $h(\ol{H})=h(\ol{G})$.
\end{proof}

In the next proof, recall that $M:=G/G''$ denotes the maximal metabelian quotient of a $1$-formal finitely generated group $G$.

\begin{proof}[Proof of Theorem~\ref{thm:growth-degree}]
Since $\Gamma_q(M)$ is finite for $q\gg 0$, it follows that $\theta_q(G)=\phi_q(M)=0$ for $q\gg 0$. Using (\ref{eq:Massey}) and the fact that (\ref{eq=bw}) is an equality when $G$ is $1$-formal, we obtain
\[\dim\, W_q(G) = \phi_{q+2}(M) = 0 \mbox{ for }q\gg 0.\]
It follows that $W(G)$ is finite dimensional, which by (\ref{eq:mainequiv}) and Theorem~\ref{thm=main} implies that $W_q(G)=0$ for $q\geq n-3$ and thus $\phi_{q}(M)=0$ for $q\geq n-1$. Combining this with the fact that $\Gamma_q(M)$ is finite for $q\gg 0$, we conclude that in fact $\Gamma_{n-1}(M)$ is finite proving conclusion (\ref{it:conc-1}) of Theorem~\ref{thm:growth-degree}.

 We note that if $N$ is a finite normal subgroup of $M=G/G''$ then the groups $M$ and $M/N$ have the same growth rate. Taking $N=\Gamma_q(M)$ where $q$ is such that $\Gamma_q(M)$ is finite, we get that $M$ has the same growth rate as the nilpotent group $M/N$, and in particular $M$ is virtually nilpotent by Gromov's theorem. We can then apply Lemma~\ref{lem:vnc} to deduce part (\ref{it:conc-2}) of Theorem~\ref{thm:growth-degree}.

Consider now $H$ to be a nilpotent finite index subgroup of $M$. Using the last part of Lemma~\ref{lem:ineq-sumphi} and the fact that $\Gamma_{n-1}(M)$ is finite we get that
\begin{equation}\label{eq:sum-phi-equal}
\sum_{q=1}^{n-2} \phi_q(H) = \sum_{q=1}^{n-2} \phi_q(M).
\end{equation}
Applying the Bass--Guivarc'h Formula \ref{thm:growth-degree} to $H$ yields
\[d(M) = d(H) = \sum_{q\geq 1} q\cdot \phi_q(H) = (n-2)\cdot\left(\sum_{q=1}^{n-2} \phi_q(H)\right) - \sum_{k=1}^{n-3}\left(\sum_{q=1}^k \phi_q(H)\right)\]
\[\overset{(\ref{eq:sum-phi-ineq}),(\ref{eq:sum-phi-equal})}{\leq} (n-2)\cdot\left(\sum_{q=1}^{n-2} \phi_q(M)\right) - \sum_{k=1}^{n-3}\left(\sum_{q=1}^k \phi_q(M)\right) = \sum_{q = 1}^{n-2} q\cdot \phi_q(M)\]
\[=\phi_1(M) + \sum_{q = 0}^{n-4} (q+2)\cdot \dim\, W_q(G).\]
Using the fact that $\phi_1(M)=b_1(G) = n$ and the inequality in Theorem~\ref{thm:HilbSeries}, it follows that
\[ d(M) \leq n + \sum_{q=0}^{n-4} {n+q-1\choose q} \cdot (n-2)\cdot(n-q-3) = n + (n-2)\cdot{2n-3\choose n-4}.\qedhere\]
\end{proof}

\begin{rmk}\label{rem:freenilp}
Just as in the case of the nilpotency class, in the absence of $1$-formality there can be no uniform bound in terms of $b_1(G)$ for the degree of polynomial growth of $M=G/G''$. Indeed, if we take $G= F_n/\Gamma_k(F_n)$ as in Example~\ref{ex=freenilp}, then $M$ is nilpotent. Combining~(\ref{thm:bass-guivarch}) with the fact that $\phi_q(M) = \theta_q(G) = \theta_q(F_n)$ is strictly positive for $q<k$, we obtain that $d(M)$ can be made arbitrarily large if we let $k\to\infty$.
\end{rmk}

\begin{rmk}\label{rem:vanishing-resonance}
 The only place in the proof of Theorem~\ref{thm:growth-degree} that required $1$-formality is the assertion that the vanishing of the Chen ranks implies the vanishing of $W_q(G)$. If we replace the $1$-formality assumption with the hypothesis that $\mc{R}(G)=\{0\}$ then the vanishing of $W_q(G)$ follows from Theorem~\ref{thm:WG-RG}, and the proof of Theorem~\ref{thm:growth-degree} carries through without any further modifications.
\end{rmk}

\subsection{K\"ahler groups.}\label{kaehlercsoport} Our results can be applied to analyze the nilpotency class of fundamental groups of compact K\"ahler manifolds.
Suppose $X$ is a compact K\"ahler manifold and $G:=\pi_1(X)$, hence $H_1(G,\mathbb Z)\cong H_1(X,\mathbb Z)$. The first thing we observe is that
$\cR(G)\neq 0$ if and only if the $(2,0)$-part of the cup product map
$$\psi_X=\cup_X^{2,0}\colon \bigwedge^2 H^0(X,\Omega_X^1)\rightarrow H^0(X,\Omega_X^2)$$
is zero on a degenerate element $0\neq \omega_1\wedge \omega_2\in \bigwedge^2 H^0(X,\Omega_X^1)$. This follows because
$\cup_X^{0,2}$ is the conjugate of $\cup_X^{2,0}$, whereas the $(1,1)$-part of the cup product in the Hodge decomposition
$\cup_X^{1,1}:H^{1,0}(X)\otimes H^{0,1}(X)\rightarrow H^{1,1}(X)$ cannot vanish on decomposable tensors. Following Catanese's
generalization of the Castelnuovo-de Franchis Theorem~\cite{Cat}, it follows that $\cR(X)\neq 0$ if and only if $X$ is fibred onto
a curve $C$ of genus at least~$2$. In this case, we have a surjection $\pi_1(X)\twoheadrightarrow \pi_1(C)$, thus $\pi_1(X)$ is not nilpotent.
We say that in this case $X$ is \emph{fibred}. Since $\pi_1(X)$ is $1$-formal, we obtain as a special case of Theorem~\ref{thm:growth-degree} the following, where we recall that $q(X)=h^1(X, \mathcal{O}_X)=\frac{b_1\bigl(\pi_1(X)\bigr)}{2}$ is the irregularity of~$X$.

\begin{thm}\label{kaehler-csoport}
Suppose that $X$ is a compact K\"ahler manifold. If $\pi_1(X)/\pi_1(X)''$ is nilpotent, then its virtual nilpotency class is at most $2q(X)-2$.
\end{thm}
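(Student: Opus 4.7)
The plan is to recognize the theorem as a direct application of Theorem~\ref{thm:growth-degree} to the group $G := \pi_1(X)$. First I would identify the three hypotheses required by that theorem: that $G$ is finitely generated and $1$-formal, that $\Gamma_q(G/G'')$ is finite for $q \gg 0$, and that $n := b_1(G) \geq 3$.

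The $1$-formality of $G$ is the classical theorem of Deligne--Griffiths--Morgan--Sullivan on the formality of compact K\"ahler manifolds. The finiteness of the lower central series of $M := G/G''$ in high degree is immediate from the standing assumption that $M$ is nilpotent. For the Betti number inequality, I would invoke Hodge theory to write $b_1(X) = 2q(X)$; the meaningful regime is $q(X) \geq 2$, giving $n \geq 4$. The edge cases $q(X) \leq 1$ can be disposed of separately by inspection (for $q(X)=0$ the abelianization of $G$ is torsion, and for $q(X)=1$ the domain $\bw^2 H^{1,0}$ of $\psi_X$ is trivial, so the non-fibred and nilpotence hypotheses combine into degenerate situations).

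With the hypotheses verified, part~(2) of Theorem~\ref{thm:growth-degree} produces a nilpotent finite index subgroup of $M$ with nilpotency class at most $n-2 = 2q(X)-2$, which is exactly the desired bound $\vnc(G/G'') \leq 2q(X)-2$. The role of the non-fibred hypothesis is to guarantee, via Catanese's extension of the Castelnuovo--de Franchis theorem recalled in the paragraph preceding the statement, that $\mc{R}(G) = \{0\}$. Strictly speaking, this vanishing is already automatic under the other hypotheses: the proof of Theorem~\ref{thm:growth-degree} extracts $\mc{R}(G)=\{0\}$ from the nilpotence of $M$ together with $1$-formality, by combining Massey's identification~(\ref{eq:Massey}), the equality in~(\ref{eq=bw}) that holds in the $1$-formal setting, and the Main Theorem.

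Because each step is either a citation to a structural result already proved in the paper or a standard fact from Hodge theory, I do not expect any serious technical obstacle. The only mildly delicate point is tracking the identification $b_1(X)=2q(X)$ to convert the bound $n-2$ furnished by Theorem~\ref{thm:growth-degree} into the K\"ahler-theoretic quantity $2q(X)-2$ appearing in the statement.
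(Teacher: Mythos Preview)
Your approach is correct and coincides with the paper's: both obtain the result as a direct specialization of Theorem~\ref{thm:growth-degree}, after recording that $\pi_1(X)$ is $1$-formal and that $b_1(X)=2q(X)$. Your observation that the non-fibred hypothesis is formally redundant---since $1$-formality together with the nilpotence of $M$ already forces $\mc{R}(G)=\{0\}$ inside the proof of Theorem~\ref{thm:growth-degree}---is accurate and a slight sharpening of the paper's presentation.

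One small caution on your handling of low irregularity: the cases $q(X)\le 1$ cannot actually be ``disposed of by inspection'' in the sense of verifying the stated bound. For an elliptic curve one has $q(X)=1$, $M\cong\bb{Z}^2$, and $\vnc(M)=1>0=2q(X)-2$. The statement should simply be read under the tacit hypothesis $q(X)\ge 2$, which is precisely the condition $n=b_1(G)\ge 3$ required to invoke Theorem~\ref{thm:growth-degree}; the paper does not address these degenerate cases either.
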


Constructing explicit K\"ahler manifolds with nilpotent fundamental group has proven to be a challenging task. Examples of K\"ahler groups with a non-trivial nilpotent filtration (in fact, of nilpotency class $2$) have been produced first by Campana \cite{Cam}. On the other hand, a K\"ahler group
is either fibred or each of its solvable quotients is virtually nilpotent \cite{Del}. In particular, solvable K\"ahler groups are virtually nilpotent.

\vskip 3pt

A recent example is provided by the Schoen surfaces discovered in \cite{Schoe} and studied further in \cite{CLMR}. They are minimal algebraic surfaces $X$ of general type having
invariants $$q(X)=4, \ p_g(X)=5, \ K_X^2=16, \ h^{1,1}(X)=12.$$
 Remarkably, $p_g(X)=2q(X)-3$, that is, we are in  the divisorial case when $\mbox{Ker}(\psi_X)$ may not contain decomposable elements. That is the case, for Schoen surfaces are not fibred. In fact $\mbox{Ker}(\psi_X)$ is of dimension one, implying $\mbox{dim } \mbox{Ker}(\cup_X)\in \{6,7\}$. It is unknown whether the fundamental group of such an $X$ is nilpotent. Applying Theorem \ref{kaehler-csoport}, we obtain that the virtual nilpotency class of $\pi_1(X)/\pi_1(X)''$ is at most $6=b_1(X)-2$.

\subsection{The Torelli group of the mapping class group.}\label{subsec:torelli}
A particularly interesting class of applications of our results  is provided by the {\em Torelli groups} $T_g$ of genus  $g\ge 2$.
We fix  a closed oriented surface $\Sigma_g$ of genus $g$ and denote by $\pi_g:=\pi_1(\Sigma_g)$ its fundamental group and by $H:=H_1(\Sigma_g,\mathbb Z)$ its first homology. Let $\mathrm{Mod}_g$ be the mapping class group  of isotopy classes of orientation-preserving homeomorphisms of $\Sigma_g$.  If $\mathcal{X}_g$ denotes the Teichm\" uller space of genus $g$, then  $\mathrm{Mod}_g$ is the orbifold
fundamental group of the moduli space $\cM_g$ of smooth curves  of genus $g$,  which can be realized as the quotient $\cM_g\cong \mathcal{X}_g/\mathrm{Mod}_g$.  Nielsen theory offers an alternative description
$\mathrm{Mod}_g\cong \mathrm{Out}^+(\pi_g)$ of the mapping class group as the group of orientation preserving outer isomorphisms of~$\pi_g$. The Torelli group consists of those outer automorphisms of $\pi_g$ that act trivially on~$H$. The \emph{Torelli space}
$$\mathcal{T}_g:=\mathcal{X}_g/T_g$$
can then be thought of as the moduli space of pairs $[C,\alpha_1, \ldots, \alpha_g, \beta_1, \ldots,  \beta_g]$, consisting of a smooth curve $C$ of genus $g$ and a symplectic basis of $H_1(C,\mathbb Z)$.

Johnson \cite{J} defined the surjective homomorphism
$$\tau\colon T_g\rightarrow \bigwedge^3 H/H,$$
where the $\mathrm{Sp}_{2g}(\mathbb Z)$-equivariant injective map $H\hookrightarrow \bigwedge^3H$ is given by $z\mapsto z\wedge \omega$, where $\omega:=\alpha_1\wedge \beta_1+\cdots +\alpha_g\wedge \beta_g\in \bigwedge^2 H$, with $(\alpha_1, \ldots, \alpha_g,\beta_1, \ldots, \beta_g)$ being a symplectic basis of $H$. Then he proved that $T_g$ is finitely generated and that the \emph{Johnson kernel}
defined by the exact sequence
$$1\longrightarrow K_g\longrightarrow T_g\longrightarrow \bigwedge^3 H/H\longrightarrow 1,$$
is precisely the subgroup of $\mathrm{Mod}_g$ generated by Dehn twists along separating simple closed curves in $\Sigma_g$. Finally, Johnson \cite{J2}  showed that the abelianization of $T_g$ is given (up to $2$-torsion) by the map $\tau$, that is one has an isomorphism
$$\tau_*\colon H_1(T_g,\mathbb Q)\cong \bigwedge^3 H_{\mathbb Q}/H_{\mathbb Q},$$
which in particular implies that
$$b_1(T_g) = {2g\choose 3}-2g.$$

For  $g\ge 4$, the Torelli group is $1$-formal  \cite{Hai, Hai2} and  $\mathcal{R} (T_g)=\{ 0\}$, see \cite[Theorem~4.4]{DP}. Furthermore, Dimca, Hain and Papadima \cite[Corollary~3.5]{DHP} proved that $B(T_g)_{\mathbb C}=H_1(K_g,\mathbb C)$ is a nilpotent module over $\mathbb C[T_g/T_g']$. We deduce from Corollary~\ref{cor=nilpmod} the following new estimate of nilpotency of the Alexander invariant of $T_g$.

\begin{thm}\label{thm:Alexander-Tg}
 If $g\geq 4$ and $I \subseteq\bb{C}[T_g/T_g']$ denotes the augmentation ideal, then
 \[I^q\cdot B(T_g)_{\mathbb C}=0 \mbox{ for } q\geq {2g\choose 3}-2g-3.\]
\end{thm}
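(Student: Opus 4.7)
The plan is to recognize that Theorem~\ref{thm:Alexander-Tg} is a direct specialization of Corollary~\ref{cor=nilpmod} to the group $G = T_g$, with $g\geq 4$. Recall that Corollary~\ref{cor=nilpmod} asserts that if a finitely generated group $G$ satisfies $\mc{R}(G) = \{0\}$ and has nilpotent Alexander invariant $B(G)$, then $\bigl(I^{b_1(G)-3}\cdot B(G)\bigr)_{\bb{C}} = 0$. Thus the task reduces to verifying these three input conditions for $T_g$ and then substituting the value of $b_1(T_g)$.

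First I would record the three topological inputs that have already been cited in the paper. The vanishing of resonance, $\mc{R}(T_g) = \{0\}$ for $g\geq 4$, is the content of \cite[Theorem~4.4]{DP}. The nilpotence of the Alexander invariant, namely that $B(T_g)_{\bb{C}} = H_1(K_g,\bb{C})$ is nilpotent as a module over $\bb{C}[T_g/T_g']$, is \cite[Corollary~3.5]{DHP}. The computation of the first Betti number, via Johnson's isomorphism $H_1(T_g,\bb{Q}) \cong \bigwedge^3 H_{\bb{Q}}/H_{\bb{Q}}$, yields $b_1(T_g) = \binom{2g}{3} - 2g$.

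With these three ingredients in hand, the proof concludes immediately: the hypotheses of Corollary~\ref{cor=nilpmod} are fulfilled for $G = T_g$ (note that $b_1(T_g) = \binom{2g}{3}-2g \geq 3$ for $g\geq 4$, so Theorem~\ref{thm:WG-RG}, which underlies Corollary~\ref{cor=nilpmod}, applies). The corollary then gives
\[
\bigl(I^{b_1(T_g)-3}\cdot B(T_g)\bigr)_{\bb{C}} = 0,
\]
which upon substituting the value of $b_1(T_g)$ yields $I^q \cdot B(T_g)_{\bb{C}} = 0$ for $q \geq \binom{2g}{3}-2g-3$, as desired.

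There is no real obstacle to this proof: the theorem is essentially the specialization of the general algebraic machinery (the Main Theorem and its consequence Corollary~\ref{cor=nilpmod}) to the setting of Torelli groups, and all necessary topological inputs are supplied by the work of Hain, Dimca--Papadima, and Dimca--Hain--Papadima already cited in the paper. The only subtlety worth flagging in the write-up is that $1$-formality of $T_g$ for $g\geq 4$ is what allows \cite{DP} to conclude $\mc{R}(T_g) = \{0\}$ from the vanishing of higher Chen ranks, and this is exactly the regime where Corollary~\ref{cor=nilpmod}, rather than its weaker non-formal analogues, applies cleanly.
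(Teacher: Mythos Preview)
Your proposal is correct and follows essentially the same approach as the paper: the paper's proof is a one-line invocation of Corollary~\ref{cor=nilpmod} after recording the three inputs (vanishing resonance from \cite{DP}, nilpotence of $B(T_g)_{\bb{C}}$ from \cite{DHP}, and Johnson's Betti number computation), which is exactly what you do.
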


More recently, it was shown in \cite[Corollary 1.5]{EH} that the metabelian quotient $T_g/T_g''$ is nilpotent for $g\geq 12$, and in particular it follows that Theorem~\ref{thm:growth-degree} applies for the Torelli groups. It follows that $\vnc(T_g/T_g'')\leq b_1(T_g)-2$, which is the statement of Theorem~\ref{thmB} in the Introduction. Of course, the groups $T_g$ and $\mathrm{Mod}_g$ are very far from being nilpotent. In fact, if $G\leq \mathrm{Mod}_g$ is a subgroup, either $G$ is virtually abelian, or it contains a non-abelian free subgroup, see \cite{McC}.

\vskip 3pt

\subsection{The Torelli group of a free group.} More generally, for  an arbitrary finitely generated group $G$, we  define its \emph{Torelli group} $T(G)\subseteq \mathrm{Out}(G)$ to be the subgroup of
outer automorphisms of $G$ consisting of automorphisms which induce the identity on $H_1(G, \mathbb{Z})$. Of particular interest is the case when $G=F_g$ is a free group on $g$ generators $x_1, \ldots, x_g$.
The corresponding Torelli group $T(F_g)$ denoted by $\OA_g$, is thus defined by the following exact sequence
$$1\longrightarrow \OA_g\longrightarrow \mbox{Out}(F_g)\longrightarrow GL_g(\mathbb Z)\longrightarrow 1.$$

The Culler-Vogtmann \emph{outer space} $X_g$ of marked metric graphs of rank $g$ can be thought of as the classifying space for $\OA_g$, see \cite{CV}. The moduli space of graphs $X_g/\OA_g$ is (essentially) the moduli space $\cM_g^{\mathrm{tr}}$ of tropical curves of genus $g$, see \cite{CMV} and references therein.
In analogy with $\OA_g$, one defines the subgroup $\IA_g$ of the group $\mathrm{Aut}(F_g)$ of all automorphisms of the free group which induce the identity on $H_1(F_g, \mathbb{Z})$. Both $\IA_g$ and $\OA_g$ are finitely generated for $g\geq 3$ by \cite{Mag}, and there exist natural quotient maps $\IA_g\onto\OA_g$.

\vskip 3pt

It is known that for $g\ge 4$ one has $\mathcal{R} (\OA_g)=\{ 0\}$, see \cite[Theorem~9.7]{PS-johnson}. Moreover, the first Betti number of $\OA_g$ has been computed by Andreadakis \cite{Andr} and Bachmut \cite{Bach}, see also \cite{K}, \cite[Theorem~9.1]{PS-johnson} and the references therein, and is given by
$$ b_1(\OA_g) = \frac{g(g+1)(g-2)}{2}.$$
Note \cite{Mag} that $\IA_g$ is generated by the following set of \emph{Magnus generators} of $\mbox{Aut}(F_g)$
$$\alpha_{ij}: x_i\mapsto x_jx_ix_j^{-1}, \ \ x_{\ell}\mapsto x_{\ell} \ \mbox{for } \ell\neq i, \mbox{ and }$$
$$\alpha_{ijk}: x_i\mapsto x_i\cdot [x_j,x_k], \ \ x_{\ell}\mapsto x_{\ell} \mbox{ for } \ell\neq i,$$
where the indices are subject to the conditions $1\leq i\neq j\leq g$ in the first case, respectively $1\leq j<k\leq g$ and $i\neq j,k$, in the second case. One has
$\frac{g^2(g-1)}{2}=b_1(\IA_g)$ generators, see again \cite{K}. When passing to $\OA_g$, one notices the relation $\alpha_{1,\ell} \cdots  \alpha_{g,\ell}=1$ for $\ell=1, \ldots, g$ and one is left with $b_1(\IA_g)-g=\frac{g(g+1)(g-2)}{2}$ generators, which explains the formula for $b_1(\OA_g)$.

\vskip 3pt

It is shown in \cite[Corollary 1.5]{EH} that the metabelian quotients of $\IA_g$ are nilpotent for $g\geq 4$, so the same is true for the metabelian quotients of $\OA_g$. Applying Theorem~\ref{thmA} with $G=\OA_g$ we obtain the following result.
\begin{thm}\label{iag}
\label{thm=oan}
If $g\geq 4$ then the metabelian quotient $\OA_g/\OA_g''$ has virtual nilpotency class at most $\frac{g(g+1)(g-2)}{2}-2$.
\end{thm}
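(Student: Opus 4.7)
The plan is to obtain Theorem~\ref{thm=oan} as a direct consequence of Theorem~\ref{thmA} applied to $G = \OA_g$. Three inputs are needed: the resonance variety of $\OA_g$ is trivial, its maximal metabelian quotient $M = \OA_g/\OA_g''$ is nilpotent, and the first Betti number is $b_1(\OA_g) = g(g+1)(g-2)/2$. Each of these has already been recorded in the narrative leading up to the theorem, so the proof is essentially a matter of assembly.

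For the resonance input, I would cite \cite[Theorem~9.7]{PS-johnson}, which gives $\mathcal{R}(\OA_g) = \{0\}$ whenever $g \geq 4$; by the equivalence discussed after Definition~\ref{def:resonance}, this is the same as saying that the cup product $\cup_{\OA_g} : \bigwedge^2 H^1(\OA_g, \bb{C}) \to H^2(\OA_g, \bb{C})$ is nonzero on every nonzero decomposable element, which is precisely the hypothesis of Theorem~\ref{thmA}. For the nilpotency of $M$, I would invoke the theorem of Ershov and He \cite[Corollary~1.5]{EH}, which asserts that $\IA_g/\IA_g''$ is nilpotent for $g \geq 4$; since the quotient map $\IA_g \onto \OA_g$ induces a surjection $\IA_g/\IA_g'' \onto \OA_g/\OA_g''$, nilpotency descends to $M$, so in particular $\Gamma_q(M)$ is trivial (hence finite) for $q \gg 0$. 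For the Betti number, I would appeal to Kawazumi's calculation \cite{K} reviewed above; note that $b_1(\OA_g) \geq 3$ for $g \geq 4$, so the dimension hypothesis of Theorem~\ref{thmA} is satisfied.

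With these inputs in place, Theorem~\ref{thmA} applies directly: its second assertion states that when $\Gamma_q(M)$ is finite for $q \gg 0$, the virtual nilpotency class of $M$ satisfies the bound of Theorem~\ref{thm:growth-degree}(\ref{it:conc-2}), namely $\vnc(M) \leq b_1(G) - 2$. Substituting $b_1(\OA_g) = g(g+1)(g-2)/2$ yields exactly the desired inequality $\vnc(\OA_g/\OA_g'') \leq g(g+1)(g-2)/2 - 2$.

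There is no significant technical obstacle to overcome, since the heavy lifting has been done by the Main Theorem and its consequence Theorem~\ref{thmA}. The only point worth emphasizing is the contrast with the proof of Theorem~\ref{thmB} for the classical Torelli group: since $\OA_g$ is not known to be $1$-formal (see \cite[Question~10.6]{PS-johnson}), one cannot access the bound via the $1$-formal route that produced Theorem~\ref{thm:growth-degree}. Instead, the conclusion must be reached through the vanishing resonance pathway laid out in Remark~\ref{rem:vanishing-resonance} and captured by Theorem~\ref{thmA}, which is precisely the version of the result designed for this situation.
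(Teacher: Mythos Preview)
Your proof is correct and follows essentially the same route as the paper: both apply Theorem~\ref{thmA} to $G=\OA_g$, feeding in the vanishing resonance from \cite[Theorem~9.7]{PS-johnson}, the nilpotency of $\OA_g/\OA_g''$ inherited from $\IA_g/\IA_g''$ via \cite[Corollary~1.5]{EH}, and Kawazumi's formula for $b_1(\OA_g)$. Your explicit remark that the $1$-formality route through Theorem~\ref{thm:growth-degree} is unavailable here, and that Theorem~\ref{thmA} (via Remark~\ref{rem:vanishing-resonance}) is the correct substitute, is exactly the point the paper is making.
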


Using Corollary~\ref{cor=nilpmod} we also obtain an analogue of Theorem~\ref{thm:Alexander-Tg} for the Alexander invariant of $\OA_g$.

\end{document}